\newtheorem*{theoA}{Theorem A}
\newtheorem*{theoB}{Theorem B}
\newtheorem*{theoC}{Theorem C}
\newtheorem{theo}{Theorem}[section]
\newtheorem{lem}{Lemma}[section]
\newtheorem{rem}{Remark}[section]
\newtheorem{que}{Question}[section]
\newtheorem{open problem}{Open problem}[section]
\newcommand{\pa}{\partial}
\newcommand{\ol}{\overline}
\newcommand{\be}{\begin{equation}}
\newcommand{\ee}{\end{equation}}
\newcommand{\bs}{\begin{small}}
\newcommand{\es}{\end{small}}
\newcommand{\beas}{\begin{eqnarray*}}
\newcommand{\eeas}{\end{eqnarray*}}
\newcommand{\bea}{\begin{eqnarray}}
\newcommand{\eea}{\end{eqnarray}}
\renewcommand{\epsilon}{\varepsilon}
\numberwithin{equation}{section}
\begin{document}
\title[Improved and refined inequalities for harmonic mappings ]{A unified framework for Bohr-type inequalities using multiple Schwarz functions}
\author[R. Biswas and R. Mandal]{Raju Biswas and Rajib Mandal}
\date{}
\address{Raju Biswas, Department of Mathematics, Raiganj University, Raiganj, West Bengal-733134, India.}
\email{rajubiswasjanu02@gmail.com}
\address{Rajib Mandal, Department of Mathematics, Raiganj University, Raiganj, West Bengal-733134, India.}
\email{rajibmathresearch@gmail.com}

\let\thefootnote\relax
\footnotetext{2020 Mathematics Subject Classification: 30A10, 30B10, 30C62, 30C75.}
\footnotetext{Key words and phrases: Harmonic mappings, Schwarz functions, Bohr radius, improved Bohr radius, refined Bohr radius, $K$-quasiconformal mappings.}
\begin{abstract} This paper introduces a unified framework for Bohr-type inequalities by incorporating multiple Schwarz functions into the majorant series for $K$-quasiconformal harmonic mappings in the unit disk $\mathbb{D} := \{z\in\mathbb{C} : |z| < 1\}$. In this study, we establish several improved and refined versions of the Bohr inequality that generalize and interconnect numerous known results. Our approach not only systematically recovers the existing theorems as special cases but also generates new results that are inaccessible through single-function methods.
\end{abstract}
\maketitle
\section{Introduction and preliminaries}
\noindent Let $H_\infty$ denote the collection of analytic functions in the unit disk $\mathbb{D}:=\{z\in\mathbb{C}: |z|<1\}$, characterized by the supremum norm $\Vert f\Vert_\infty:=\sup_{z\in\mathbb{D}}|f(z)|<\infty$.
For a function $f\in H_\infty$ represented by the Taylor series expansion $f(z)=\sum_{n=0}^{\infty} a_nz^n$, then
\bea\label{e2} \sum_{n=0}^{\infty}|a_n|r^n\leq \Vert f\Vert_\infty\quad\text{for}\quad|z|= r\leq\frac{1}{3}.\eea
In this context, the quantity $1/3$ is referred to as the Bohr radius, which cannot be improved. Inequality (\ref{e2}) is known as the classical Bohr inequality.
Indeed, Bohr \cite{B1914} proved that inequality (\ref{e2}) holds for $r\leq 1/6$. Subsequently, Schur, Wiener, and Riesz independently proved that the inequality (\ref{e2}) holds for $r\leq 1/3$. The proofs of Bohr and Wiener can be found in \cite{B1914}.\\[2mm]
\indent In addition to the Bohr radius, the Rogosinski radius is another important concept (see \cite{R1923}). 
Building upon the concept of the Rogosinski radius, Kayumov {\it et al.}\cite{KKP2021} introduced the Bohr-Rogosinski radius,  defined as the largest value $r_0\in(0, 1)$ such that the inequality $R_N^f(z)\leq 1$ holds for $|z|< r_0$, where
\beas R_N^f(z):=|f(z)|+\sum_{n=N}^{\infty}|a_n||z|^n, \quad N\in\mathbb{N}\eeas
for the analytic function $f$ in $\mathbb{D}$ with $|f(z)|\leq 1$ in $\Bbb{D}$. Kayumov {\it et al.}\cite{KKP2021} have established several results concerning the Bohr-Rogosinski radius.\\[2mm]
\indent Before proceeding with the discussion, it is necessary to establish certain notations.
Let $\mathcal{B}=\{f\in H_\infty :\Vert f\Vert_\infty\leq 1\}$ and 
\beas\mathcal{B}_m=\left\{f\in\mathcal{B}: f(0)=f'(0)=\cdots=f^{(m-1)}(0)=0\quad\text{and}\quad f^{(m)}(0)\not=0\right\},\quad m\in\mathbb{N}.\eeas
\noindent Let $\mathbb{D}_ r:=\{z\in\mathbb{C}: |z|< r \}$ for $0< r<1$. For an analytic function $h$ in $\mathbb{D}$, let $S_ r(h)$ denote the planar integral  
\beas S_ r(h)=\int_{\mathbb{D}_ r} |h'(z)|^2 dA(z).\eeas
If $h(z)=\sum_{n=0}^\infty a_n z^n$, then it is well-known that $S_ r(h)/\pi=\sum_{n=1}^\infty n|a_n|^2 r^{2n}$. Moreover, when $h$ is univalent in the unit disk $\mathbb{D}$, the expression $S_ r(h)$ corresponds to the area of the image $h(\mathbb{D}_ r)$. In addition, if $f(z)$ is analytic in $\mathbb{D}$, then we say that $f$ is quasi-subordinate to $h$, denoted by $f(z)\prec_q h(z)$ in $\mathbb{D}$, if there exist analytic functions $\phi$ and $\omega$ in $\mathbb{D}$ satisfying $\omega(0)=0$, $|\omega(z)|< 1$, and $|\phi(z)|\leq 1$ for $|z|<1$ such that $f(z)=\phi(z) h(\omega(z))$ for $z\in\Bbb{D}$ (see \cite{R1973}). \\[2mm]
In 2018, Kayumov and Ponnusamy \cite{KP2018} obtained several improved versions of Bohr's inequality for bounded analytic functions in the unit disk $\mathbb{D}$. Here, we recall one such version.
\begin{theoA}\cite{KP2018}
Let $f(z)=\sum_{n=0}^{\infty} a_nz^n$ be analytic in $\mathbb{D}$ such that $|f(z)|\leq 1$ in $\mathbb{D}$ and $S_ r$ denote the area of the image of the subdisk $|z|< r$ under mapping $f$. Then,
\beas \sum_{n=0}^{\infty} |a_n| r^n+\frac{16}{9}\left(\frac{S_ r(f)}{\pi}\right)\leq 1\quad\text{for}\quad r\leq\frac{1}{3}.\eeas
The numbers $1/3$ and $16/9$ cannot be improved further. \end{theoA}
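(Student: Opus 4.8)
To prove this I would proceed as follows.

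The plan is first to normalise. Both $\sum_{n\ge 0}|a_n|r^n$ and $S_r(f)/\pi=\sum_{n\ge 1}n|a_n|^2r^{2n}$ are unchanged when $f(z)$ is replaced by $e^{-i\alpha}f(e^{i\beta}z)$, so I may assume $a_0=a:=|a_0|\in[0,1)$ (the case $|a_0|=1$ forces $f$ constant and is trivial). Each summand $|a_n|r^n$, $n|a_n|^2r^{2n}$ is nondecreasing in $r$, so it suffices to prove the estimate at $r=\tfrac13$, and after subtracting $a$ the assertion becomes the coefficient inequality
\[
\sum_{n=1}^{\infty}|a_n|\,3^{-n}\;+\;\frac{16}{9}\sum_{n=1}^{\infty}n\,|a_n|^2\,9^{-n}\ \le\ 1-a .
\]

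Next I would assemble the Schwarz--Pick data. With $\omega=(f-a_0)/(1-\overline{a_0}f)=\sum_{n\ge1}c_nz^n$ the associated Schwarz function ($\omega(0)=0$, $|\omega(z)|\le|z|$), the representation $f=a_0+(1-a^2)\,\omega/(1+\overline{a_0}\,\omega)$ yields $|a_n|\le 1-a^2$ for $n\ge1$, $\sum_{n\ge1}|a_n|^2\le 1-a^2$, $|a_1|=(1-a^2)|c_1|$ and $\sum_{n\ge2}|c_n|^2\le 1-|c_1|^2$, with analogous bounds for the higher tails obtained by iterating. For the area term I would use the elementary facts $n\,9^{-n}\le 9^{-1}$ for every $n\ge1$ and $n\,9^{-n}\le 2\cdot 9^{-2}$ for $n\ge 2$, so that $\tfrac{16}{9}\sum_{n\ge1}n|a_n|^2 9^{-n}\le \tfrac{16}{9}(9^{-1}|a_1|^2+2\cdot 9^{-2}\sum_{n\ge2}|a_n|^2)$.

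The heart of the matter --- and the step I expect to be the real obstacle --- is that the two series cannot be estimated separately: replacing $\sum_{n\ge1}|a_n|3^{-n}$ by the classical bound $(1-a^2)/2$ already consumes the whole margin $1-a$ to first order, since $a+(1-a^2)/2=1-(1-a)^2/2$, while the crude area bound still leaves a positive contribution of order $(1-a)^2$; hence any argument treating the two sums independently must fail precisely in the regime $a\to1$, which is where the inequality is sharp. The plan to get around this is to keep the leading coefficients explicit and exploit that the constraints above force a trade-off --- a nearly maximal Bohr sum makes the higher coefficients, hence the area, small, and conversely. Concretely, with $b=|a_1|\in[0,1-a^2]$ one peels off the $n=1$ terms, bounds $\sum_{n\ge2}|a_n|3^{-n}$ and $\sum_{n\ge2}|a_n|^2$ using, for each, the sharper of the $\ell^\infty$-type bound (from $|a_n|\le1-a^2$) and the $\ell^2$-type bound (from $\sum_{n\ge2}|a_n|^2\le(1-a^2)-b^2$), and reduces the claim to a real inequality in $a$ and $b$; this last inequality is then verified, treating separately the near-extremal range $b\approx 1-a^2$ (there $f$ is a small perturbation of a M\"obius map and one expands in $1-a$) and the complementary range (where the area term is genuinely smaller and these bounds have slack). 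If two parameters do not suffice one also retains $|a_2|$ --- only finitely many coefficients matter, the tail being harmless --- and $16/9$ is exactly the constant for which the estimate just closes at $r=\tfrac13$ as $a\to1$.

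Finally I would verify sharpness on the family $f(z)=(a+z)/(1+az)$, for which $\omega(z)=z$ and $|a_n|=a^{n-1}(1-a^2)$. A direct computation at $r=\tfrac13$ gives
\[
\sum_{n=0}^{\infty}|a_n|\,3^{-n}+\frac{16}{9}\,\frac{S_{1/3}(f)}{\pi}
=a+\frac{1-a^2}{3-a}+\frac{16(1-a^2)^2}{(9-a^2)^2}
=1-(1-a)^3+O((1-a)^4),
\]
which tends to $1$ as $a\to1$. For $r>\tfrac13$ even the weaker bound $\sum_{n\ge0}|a_n|r^n\le1$ fails for this family with $a$ near $1$, and replacing $16/9$ by any larger constant makes the left-hand side exceed $1$ for $a$ near $1$; thus neither $1/3$ nor $16/9$ can be improved.
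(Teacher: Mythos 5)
The paper does not prove Theorem A at all: it is quoted verbatim from \cite{KP2018} as background, so there is no in-paper argument to compare with and your proposal must stand on its own. Its outer layers are fine. The normalization, the reduction to $r=1/3$ by monotonicity, and the sharpness discussion are correct; in particular your expansion $1-(1-a)^3+O((1-a)^4)$ for $(a+z)/(1+az)$ at $r=1/3$ checks out, and the argument that neither $1/3$ nor $16/9$ can be improved is complete. The genuine gap is the central inequality itself. The step ``reduces the claim to a real inequality in $a$ and $b$; this last inequality is then verified'' is never carried out, and with the tools you actually allow into the reduction it cannot be. Take $a$ close to $1$, $\epsilon=1-a$, and the coefficient data $|a_1|=\dots=|a_M|=1-a^2$, $a_n=0$ for $n>M$, with $M=\lceil\log_3(4/\epsilon)\rceil$. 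It satisfies every constraint you use: $|a_n|\le 1-a^2$ and $\sum_{n\ge2}|a_n|^2=(M-1)(1-a^2)^2\le(1-a^2)-|a_1|^2=a^2(1-a^2)$ for small $\epsilon$. Yet $\sum_{n\ge1}|a_n|3^{-n}=(1-a^2)(1-3^{-M})/2\ge\epsilon-\tfrac34\epsilon^2$, while the $n=1,2$ terms of the area part alone give $\tfrac{16}{9}\sum_{n}n|a_n|^2 9^{-n}\ge\tfrac{176}{729}(1-a^2)^2\approx 0.96\,\epsilon^2$, so the left-hand side exceeds $1-a$ by roughly $\epsilon^2/5$. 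Since the violation is produced by a block of about $\log(1/\epsilon)$ coefficients sitting at the ceiling $1-a^2$, retaining $|a_2|$ (or any fixed finite number of coefficients) does not repair it: the reduced real inequality in $(a,b)$ or $(a,b,|a_2|)$ that you propose to verify is simply false near $a=1$. The point is that the admissible set cut out by $|a_n|\le1-a^2$ plus the $\ell^2$ budget is strictly larger than the coefficient bodies of the Schur class, and the theorem fails on the larger set; the Schwarz--Pick structure of $\omega$, which you list but never feed into the final reduction, must be used quantitatively --- exactly because, as you yourself observe, the inequality has only third-order slack at $a=1$.

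For comparison, one route that does close the argument (close in spirit to \cite{KP2018}) is to replace your two tail bounds by two genuinely function-theoretic ones: writing $f=a_0+(1-|a_0|^2)\sum_{k\ge1}(-\overline{a_0})^{k-1}\omega^k$ and applying Bohr's inequality to each bounded function $\omega^k/z^k$ gives $\sum_{n\ge1}|a_n|r^n\le (1-a^2)r/(1-ar)$ for $r\le 1/3$, and the subordination $f\prec (a+z)/(1+az)$ gives $S_r(f)/\pi\le (1-a^2)^2r^2/(1-a^2r^2)^2$ for $r\le 1/\sqrt2$. At $r=1/3$ the claim then reduces, after dividing by $(1-a)^2$, to the elementary inequality $8(1+a)^2\le(3-a)(3+a)^2$ on $[0,1]$, equivalently $19-7a-11a^2-a^3\ge0$, which holds with equality only at $a=1$ --- consistent with the delicate cancellation you correctly diagnosed. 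Without lemmas of this sharp, structural kind (or an equivalent exploitation of the higher Schwarz--Pick constraints), the plan as written does not yield a proof.
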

\indent Following the foundational work of Kayumov {\it et al.} \cite{KKP2021, KP2018}, a significant number of authors have investigated various modifications to Bohr-type inequalities (see \cite{AKP2020,LSX2018,KKP2021}).\\[1mm]
Consider the function $f = u + iv$, which is complex-valued and defined in terms of $z = x + iy$ within a simply connected domain $\Omega$. If the function $f$ is twice 
continuously differentiable and satisfies the Laplace equation $\Delta f = 4f_{z\ol z} = 0$ in $\Omega$, then $f$ is harmonic in $\Omega$. The derivatives are defined as $f_z = (f_x - i f_y)/2$ and $f_{\ol{z}} = (f_x + i f_y)/2$. It is important to note that every 
harmonic mapping $f$ can be expressed in its canonical form as $f = h + \ol g$, where $h$ and $g$ are analytic in $\Omega$ and are referred to as the analytic and co-analytic parts of 
$f$, respectively. This representation is unique up to an additive constant (see \cite{D2004}). The inverse function theorem, along with Lewy's result \cite{L1936}, establishes that a 
harmonic function $f$ is locally univalent in $\Omega$ if, and only if, the Jacobian of $f$, given by $J_f(z):=|h'(z)|^2-|g'(z)|^2$, is nonzero in $\Omega$. A harmonic mapping $f$ is 
locally univalent and sense-preserving in $\Omega$ if, and only if, $J_f (z)>0$ in $\Omega$, or equivalently, if $h' \neq 0$ in $\Omega$ and the dilatation $\omega = g'/h'$ of $f$ satisfies the condition $|\omega|\leq 1$ in $\Omega$ (see \cite{L1936}).\\[2mm]
\indent If a locally univalent and sense-preserving harmonic mapping $f = h+\ol{g}$ satisfies the condition $|g'(z)/h'(z)| \leq k < 1$ for $z\in\mathbb{D}$, then $f$ is called a $K$-quasiconformal harmonic mapping on $\mathbb{D}$, where $K = (1+k)/(1-k) \geq 1$ (see \cite{K2008,M1968}). It is evident that as $k\to 1$, the limiting case corresponds to $K \to\infty$.\\[2mm]
\indent In 2018, Kayumov {\it et al.} \cite{KPS2018} advanced the classical Bohr theorem by introducing a harmonic extension, which led to the following insights.
\begin{theoB}\cite{KPS2018} Consider the function $f(z)=h(z)+\ol{g(z)}=\sum_{n=0}^\infty a_n z^n+\ol{\sum_{n=1}^\infty b_n z^n}$, which is a sense-preserving and $K$-quasiconformal harmonic mapping in $\mathbb{D}$, where $h(z)$ is bounded in $\mathbb{D}$. Then,
\beas\sum_{n=0}^\infty |a_n| r^n+\sum_{n=1}^\infty |b_n| r^n\leq \Vert h(z)\Vert_\infty\quad\text{for}\quad r\leq \frac{K+1}{5K+1}.\eeas
The value $(K+1)/(5K+1)$ is optimal and cannot be improved. Furthermore,
\beas |a_0|^2+\sum_{n=1}^\infty (|a_n|+|b_n|) r^n\leq \Vert h(z)\Vert_\infty\quad\text{for}\quad r\leq \frac{K+1}{3K+1}.\eeas
The number $(K+1)/(3K+1)$ cannot be improved.\end{theoB}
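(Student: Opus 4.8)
The plan is to normalise so that $\|h\|_\infty=1$ — legitimate, since replacing $h$ and $g$ by $h/\|h\|_\infty$ and $g/\|h\|_\infty$ does not alter the dilatation $\omega=g'/h'$ and hence preserves $K$-quasiconformality — and then to reduce each of the two asserted inequalities to a one–variable inequality in the radius $r$ and the parameter $a:=|a_0|=|h(0)|\in[0,1)$, the case $a=1$ forcing $h$ (and therefore $g$) to be constant and being trivial. For the analytic part I would use the classical coefficient bound $|a_n|\le 1-a^2$ for $n\ge1$, valid whenever $|h(z)|\le1$, which gives $\sum_{n=0}^{\infty}|a_n|r^n\le a+(1-a^2)\frac{r}{1-r}$.

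The decisive ingredient is a transfer estimate carrying control of the majorant series of $h$ over to that of $g$ through the dilatation, namely
\[
\sum_{n=1}^{\infty}|b_n|r^n\le k(1-a^2)\frac{r}{1-r}.
\]
To obtain it I would write $\omega=k\nu$ with $\|\nu\|_\infty\le1$, compare Taylor coefficients in $g'(z)=k\,\nu(z)h'(z)$ to get $n|b_n|\le k\sum_{m=1}^{n}|\nu_{n-m}|\,m|a_m|$, insert $|a_m|\le1-a^2$, interchange the order of summation and resum; this yields $\sum_{n\ge1}|b_n|r^n\le\frac{k(1-a^2)r}{1-r}\sum_{l\ge0}|\nu_l|r^l$, and the classical Bohr inequality (\ref{e2}) applied to $\nu$ bounds the last factor by $1$ once $r\le1/3$. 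Since the radius of the first assertion is $(K+1)/(5K+1)=1/(3+2k)\le1/3$, this already suffices there. The main obstacle concerns the second assertion: its radius $(K+1)/(3K+1)=1/(2+k)$ can exceed $1/3$ (precisely when $k<1$), so the displayed transfer estimate is needed on a longer range of $r$. I would secure it via the substitution $h'(z)=(1-a^2)\phi'(z)/(1+\overline{a_0}\phi(z))^2$ with $\phi$ a Schwarz function vanishing at $0$, which recasts $g'=(1-a^2)\,\omega\cdot\big(\phi/(1+\overline{a_0}\phi)\big)'$ and reduces the desired bound to an integrated Bohr-type estimate for the product $\omega\cdot\big(\phi/(1+\overline{a_0}\phi)\big)'$; controlling that product on all of $(0,1)$ is the technical heart of the argument.

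Granting the transfer estimate, both conclusions follow by substitution and a worst-case analysis in $a$. For the first, $\sum_{n\ge0}|a_n|r^n+\sum_{n\ge1}|b_n|r^n\le a+(1+k)(1-a^2)\frac{r}{1-r}$, and demanding this be $\le1$ amounts to $(1+k)(1+a)\frac{r}{1-r}\le1$, whose worst case $a\to1^-$ gives exactly $r\le1/(3+2k)$. For the second, $|a_0|^2+\sum_{n\ge1}(|a_n|+|b_n|)r^n\le a^2+(1+k)(1-a^2)\frac{r}{1-r}$, and after dividing by $1-a^2$ the requirement becomes $(1+k)\frac{r}{1-r}\le1$, i.e. $r\le1/(2+k)$. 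For sharpness in both cases I would take $h(z)=(a-z)/(1-az)$ with constant dilatation $\omega\equiv k$, so that $g(z)=k(h(z)-a)$, $|a_n|=(1-a^2)a^{n-1}$ and $|b_n|=k(1-a^2)a^{n-1}$; the two majorant sums then equal $a+(1+k)(1-a^2)\frac{r}{1-ar}$ and $a^2+(1+k)(1-a^2)\frac{r}{1-ar}$ respectively, and a first-order expansion as $a\to1^-$ shows each exceeds $1$ the moment $r$ surpasses $1/(3+2k)$, resp. $1/(2+k)$, so neither constant can be enlarged.
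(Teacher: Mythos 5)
Your reduction to the one-variable inequalities, the worst-case analysis in $a$, and the sharpness construction via $h(z)=(a-z)/(1-az)$ with constant dilatation $\omega\equiv k$ are all correct, and your derivation of the transfer estimate $\sum_{n\geq1}|b_n|r^n\leq k(1-a^2)\frac{r}{1-r}$ by writing $\omega=k\nu$, comparing coefficients in $g'=k\nu h'$ and invoking the classical Bohr inequality for $\nu$ is valid --- but only for $r\leq 1/3$, as you yourself note. That range covers the first assertion, since $(K+1)/(5K+1)=1/(3+2k)\leq 1/3$, so the first inequality and its sharpness are fully proved by your argument.

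The genuine gap is in the second assertion. Its radius $(K+1)/(3K+1)=1/(2+k)$ exceeds $1/3$ for every $k<1$, so the transfer estimate must be established on $[0,1/(2+k)]$, and your Bohr-inequality route cannot give it there; the substitution $h'=(1-a^2)\phi'/(1+\overline{a_0}\phi)^2$ and the ``integrated Bohr-type estimate for the product'' are only announced, not carried out, and you concede this is ``the technical heart of the argument.'' As it stands, the second inequality is therefore unproved. The standard (and the present paper's) way around this is quite different and requires no restriction on $r$: one uses the quadratic coefficient comparison of Lemma \ref{lem3}, namely $\sum_{n\geq1}|b_n|^2r^n\leq k^2\sum_{n\geq1}|a_n|^2r^n$, valid for all $r<1$ whenever $|g'|\leq k|h'|$ and $|h|\leq1$, and then the Cauchy--Schwarz inequality together with $|a_n|\leq1-a^2$ exactly as in (\ref{f1})--(\ref{f2}) in the proof of Theorem \ref{TT1}; this yields $\sum_{n\geq1}|b_n|r^n\leq k(1-a^2)\frac{r}{1-r}$ on all of $[0,1)$, after which your own concluding computation $(1+k)\frac{r}{1-r}\leq1$ gives $r\leq(K+1)/(3K+1)$. (Note also that the paper does not prove Theorem B directly: it is quoted from \cite{KPS2018} and recovered as limiting cases of Theorems \ref{TT1} and \ref{T7}, whose proofs rest precisely on Lemma \ref{lem3}; your plan replaces that lemma by a convolution-plus-Bohr argument, which is the step that breaks beyond $r=1/3$.)
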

\begin{theoC}\cite{KPS2018} Consider the function $f(z)=h(z)+\ol{g(z)}=\sum_{n=0}^\infty a_n z^n+\ol{\sum_{n=2}^\infty b_n z^n}$, which represents a sense-preserving and $K$-quasiconformal harmonic mapping in $\mathbb{D}$, where $h(z)$ is bounded in $\mathbb{D}$. Then,
\beas\sum_{n=0}^\infty |a_n| r^n+\sum_{n=2}^\infty |b_n| r^n\leq \Vert h(z)\Vert_\infty\quad\text{for}\quad r\leq r_K,\eeas
where $ r_K$ is the positive root of the equation
\beas \frac{ r}{1- r}+\left(\frac{K-1}{K+1}\right) r^2\sqrt{\frac{1+ r^2}{(1- r^2)^3}}\sqrt{\frac{\pi^2}{6}-1}=\frac{1}{2}.\eeas
The number $ r_K$ cannot be replaced by any number greater than $R=R(K)$, where $R$ is the positive root of the equation
\beas \frac{4R}{1-R}\left(\frac{K}{K+1}\right)+2\left(\frac{K-1}{K+1}\right)\log(1-R)=1.\eeas\end{theoC}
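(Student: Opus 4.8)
I would prove the two halves separately. For the inequality, normalise $\Vert h\Vert_\infty=1$ and split the majorant series into its analytic and co-analytic parts. The analytic part is handled at once by the classical coefficient bound $|a_n|\le 1-|a_0|^2$ for $n\ge 1$ (valid for every $h\in\mathcal B$), which gives $\sum_{n=0}^\infty|a_n|r^n\le |a_0|+(1-|a_0|^2)\,r/(1-r)$. The decisive observation for the co-analytic part is to exploit that $g$ begins with the quadratic term: since $f$ is sense-preserving, $h'$ never vanishes, so $h'(0)=a_1\neq 0$, and therefore the dilatation $\omega=g'/h'$ is analytic, bounded by $k:=(K-1)/(K+1)$, and also satisfies $\omega(0)=g'(0)/h'(0)=0$. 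By the Schwarz lemma this improves to $|\omega(z)|\le k|z|$, hence $|g'(z)|\le k|z|\,|h'(z)|$ on $\mathbb D$.

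From here the co-analytic estimate is mechanical. Integrating $|g'|^2\le k^2|z|^2|h'|^2$ over $|z|=r$ and using Parseval gives
\[
\sum_{n=2}^\infty n^2|b_n|^2 r^{2n-2}\ \le\ k^2r^2\sum_{n=1}^\infty n^2|a_n|^2 r^{2n-2}\ \le\ k^2 r^2(1-|a_0|^2)^2\,\frac{1+r^2}{(1-r^2)^3},
\]
the last step using $|a_n|\le 1-|a_0|^2$ and $\sum_{n\ge1}n^2 r^{2n-2}=(1+r^2)/(1-r^2)^3$. Writing $|b_n|r^n=(n|b_n|r^{n-1})(r/n)$ and applying Cauchy--Schwarz with the weights $1/n^2$ (so that $\sum_{n\ge2}1/n^2=\pi^2/6-1$) then yields
\[
\sum_{n=2}^\infty|b_n|r^n\ \le\ k(1-|a_0|^2)\,r^2\sqrt{\frac{1+r^2}{(1-r^2)^3}}\,\sqrt{\tfrac{\pi^2}{6}-1}.
\]
Adding the two bounds and setting $a=|a_0|$ and $M(r)=\frac{r}{1-r}+k\,r^2\sqrt{\frac{1+r^2}{(1-r^2)^3}}\,\sqrt{\tfrac{\pi^2}{6}-1}$, the majorant series is at most $a+(1-a^2)M(r)$. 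Since $a+(1-a^2)M\le 1$ is equivalent to $(1-a)\bigl(1-(1+a)M\bigr)\ge 0$, i.e.\ to $(1+a)M\le 1$, and the worst case is $a\to 1^-$, the bound holds precisely when $M(r)\le\tfrac12$; as $M$ is strictly increasing from $0$ and unbounded as $r\to 1^-$, this is exactly $r\le r_K$, with $r_K$ the positive root of $M(r)=\tfrac12$, i.e.\ the stated equation after substituting $k=(K-1)/(K+1)$.

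For the sharpness claim I would test the one-parameter family $h(z)=\varphi_a(z):=(a-z)/(1-az)$, $a\in(0,1)$, with $g$ determined by $g'(z)=kz\,h'(z)$, $g(0)=0$; the resulting $f=h+\ol g$ lies in the required class (dilatation $kz$, so $|g'/h'|\le k<1$; $h'$ never vanishes; $\Vert h\Vert_\infty=1$; $g$ vanishes to order two). A direct computation gives $|a_n|=(1-a^2)a^{n-1}$ and $|b_n|=k(1-a^2)\frac{n-1}{n}a^{n-2}$, so the majorant series equals
\[
F(a,r)=a+(1-a^2)\Bigl[\tfrac{r}{1-ar}+k\sum_{n=2}^\infty\tfrac{n-1}{n}a^{n-2}r^n\Bigr],\qquad F(1,r)=1 .
\]
Differentiating in $a$ and letting $a\to 1^-$ (using $\sum_{n\ge1}\frac{n-1}{n}r^n=\frac{r}{1-r}+\log(1-r)$) produces
\[
\frac{\pa F}{\pa a}\Big|_{a=1}=1-\frac{4r}{1-r}\cdot\frac{K}{K+1}-\frac{2(K-1)}{K+1}\log(1-r).
\]
Since $r\mapsto\frac{4r}{1-r}\cdot\frac{K}{K+1}+\frac{2(K-1)}{K+1}\log(1-r)$ is strictly increasing on $(0,1)$ from $0$ to $+\infty$ (its derivative is $2(1+kr)/(1-r)^2>0$), it equals $1$ at a unique point $R=R(K)$, which is the stated root; and for $r>R(K)$ the displayed derivative is negative, so $F(a,r)>F(1,r)=1$ for $a$ slightly below $1$, showing the Bohr inequality fails there. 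Hence $r_K$ cannot be replaced by any number larger than $R(K)$.

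The step I expect to be the crux is the first one in the co-analytic estimate: recognising that ``$g$ starts at $n=2$'' forces $\omega(0)=0$ and therefore, via the Schwarz lemma, an extra factor $|z|$ in the bound for $|g'|$ --- this is exactly what replaces the linear $r$-term of Theorem~B by the quadratic $r^2$-term here. The remaining ingredients (the Parseval/area bound, the Cauchy--Schwarz step producing $\sqrt{\pi^2/6-1}$, the one-variable maximisation over $|a_0|$, and the monotonicity used for sharpness) are routine.
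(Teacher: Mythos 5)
Your proposal is correct, but note that the paper never proves Theorem C --- it is quoted from \cite{KPS2018} --- so the only in-paper argument to compare with is the proof of Theorem \ref{T6}. Your route (Schwarz's lemma applied to the dilatation, using $b_1=0$ to get $|g'(z)|\le k|z||h'(z)|$; then Parseval on $|g'|\le k|z||h'|$, the Wiener bound $|a_n|\le 1-|a_0|^2$, and Cauchy--Schwarz with weights $1/n$ producing the factors $\sqrt{(1+r^2)/(1-r^2)^3}$ and $\sqrt{\pi^2/6-1}$; then the reduction of $a+(1-a^2)M(r)\le 1$ to $2M(r)\le 1$; and finally the family $h=(a-z)/(1-az)$, $g'=kzh'$ with the derivative computation at $a=1$) is exactly the original argument behind Theorem C, and I checked the computations: the coefficient moduli, the identity $\sum_{n\ge1}n^2x^{n-1}=(1+x)/(1-x)^3$, the value $\partial_aF|_{a=1}=1-\tfrac{4r}{1-r}\tfrac{K}{K+1}-\tfrac{2(K-1)}{K+1}\log(1-r)$, and the monotonicity $\Phi'(r)=2(1+kr)/(1-r)^2>0$ are all right, so both halves of the statement follow. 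The paper's own contribution is precisely to replace your Cauchy--Schwarz step by Lemma \ref{lem30} (the inequality $\sum n|b_n|r^{n-1}\le k\sum n|a_n|r^n$, valid for $r\le 1/3$) followed by termwise integration, which gives $\sum_{n\ge2}|b_n|r^n\le k(1-a^2)\bigl(\tfrac{r}{1-r}+\log(1-r)\bigr)$; this sharper co-analytic bound closes the gap between the two equations in Theorem C and shows (Theorem \ref{T6} with $p=m=1$, Remark \ref{rem2}(i)) that the optimal radius is exactly your $R(K)$, i.e.\ the root of $\tfrac{2r}{1-r}+2k\bigl(\tfrac{r}{1-r}+\log(1-r)\bigr)=1$. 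So your proof establishes Theorem C as stated (a two-sided estimate with a gap), while the paper's method both avoids the $\sqrt{\pi^2/6-1}$ loss and yields sharpness; the only price is the auxiliary restriction $r\le 1/3$ coming from Lemma \ref{lem30}, which is harmless since the resulting radius lies below $1/3$.
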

Additionally, several authors have studied other aspects of Bohr's inequality.
We refer to \cite{ABM2024,AKP2019,AAH2022,AH2021,1AH2021,AH2022,BDK2004,BB2004,EPR2019,EPR2021,FR2010,HLP2020,IKP2020,1KP2018,LP2019,MBG2024,PW2020} and the references listed therein for an in-depth investigation on the Bohr radius.\\[2mm]
\noindent The aforementioned results show a development from classical settings to complex function classes. In this study, we demonstrate that the simultaneous incorporation of 
multiple Schwarz functions creates a powerful, comprehensive framework. This approach not only allows us to recover the aforementioned theorems as special cases but also yields 
new, sharper results that are inaccessible through single-function methods. This brings us to the following central questions of this paper:
\begin{que}\label{Q1} Can the theory of Bohr-type inequalities for harmonic mappings be systematically extended through the simultaneous incorporation of multiple Schwarz functions, namely $\omega_p(z)$, $\omega_m(z)$ and $\omega_q(z)$, for the analytic and co-analytic parts?
\end{que}
\begin{que}\label{Q2} Is it possible to not only generalize but also sharpen \textrm{Theorem C} by obtaining a single equation for the optimal Bohr radius that improves upon the original two-sided bound?
\end{que}
\noindent The purpose of this paper is primarily to provide affirmative answers to Questions \ref{Q1} and \ref{Q2}.
\section{Key lemmas}
\noindent The following lemmas are required to prove our main results.
\begin{lem}\label{lem1} \cite[Pick's invariant form of Schwarz lemma]{K2006} If $f$ is analytic in $\mathbb{D}$ with $|f(z)|\leq1$, then 
\beas |f(z)|\leq \frac{|f(0)|+|z|}{1+|f(0)||z|}\quad\text{for}\quad z\in\mathbb{D}.\eeas\end{lem}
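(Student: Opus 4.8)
The plan is to reduce the statement to the classical Schwarz lemma by post-composing $f$ with an automorphism of $\mathbb{D}$ that normalises the value at the origin, and then optimising the resulting estimate over a circle. Write $a:=f(0)$, so $|a|\le 1$. If $|a|=1$, then by the maximum modulus principle $f$ is the unimodular constant $a$ and the asserted inequality is the trivial identity $1\le 1$; the same applies if $f$ is any constant. So from now on assume $|a|<1$ and $f$ non-constant.

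First I would introduce the M\"obius automorphism $\varphi_a(w):=\dfrac{w-a}{1-\ol{a}\,w}$, which maps $\ol{\mathbb{D}}$ onto itself, is analytic on a neighbourhood of $\ol{\mathbb{D}}$ (since $|a|<1$), satisfies $\varphi_a(a)=0$, and has inverse $\varphi_a^{-1}(w)=\dfrac{w+a}{1+\ol{a}\,w}$. Set $g:=\varphi_a\circ f$. Then $g$ is analytic in $\mathbb{D}$, $|g(z)|\le 1$ there, and $g(0)=0$. Applying the classical Schwarz lemma to $g$ — the function $g(z)/z$ has a removable singularity at the origin and is bounded by $1/\rho$ on $|z|=\rho$ for each $\rho\in(0,1)$, so by the maximum modulus principle $|g(z)/z|\le 1/\rho$ on $|z|\le\rho$, and letting $\rho\to 1^-$ gives $|g(z)|\le|z|$ on $\mathbb{D}$.

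Next I would translate this back through $f=\varphi_a^{-1}\circ g$. Fixing $z\in\mathbb{D}$ and writing $r:=|z|$, $w:=g(z)$, we have $|w|\le r$ and
\[
|f(z)|=\left|\frac{w+a}{1+\ol{a}\,w}\right|.
\]
The remaining task, and the only point requiring a small computation, is to show $\left|\dfrac{w+a}{1+\ol{a}\,w}\right|\le\dfrac{r+|a|}{1+|a|r}$ whenever $|w|\le r$. Since $w\mapsto(w+a)/(1+\ol{a}\,w)$ is analytic on $\ol{\mathbb{D}_r}$, the maximum modulus principle reduces this to $|w|=r$; there I would compute
\[
\left|\frac{w+a}{1+\ol{a}\,w}\right|^{2}=\frac{|w|^{2}+2\,\mathrm{Re}(\ol{a}\,w)+|a|^{2}}{1+2\,\mathrm{Re}(\ol{a}\,w)+|a|^{2}|w|^{2}},
\]
and regard the right-hand side, with $|w|=r$ fixed, as a function of the real parameter $t:=\mathrm{Re}(\ol{a}\,w)\in[-|a|r,\,|a|r]$. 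A direct differentiation in $t$ shows the numerator of the derivative equals $2(1-r^{2})(1-|a|^{2})>0$, so the quotient is increasing in $t$ and is maximised at $t=|a|r$, where it equals $\dfrac{(r+|a|)^{2}}{(1+|a|r)^{2}}$. Taking square roots yields $|f(z)|\le\dfrac{|z|+|f(0)|}{1+|f(0)||z|}$, as claimed.

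I do not anticipate any serious obstacle here: the argument is entirely classical. The only mildly delicate steps are choosing the normalising automorphism so that $\varphi_a\circ f$ vanishes at the origin, disposing of the degenerate cases $|f(0)|=1$ and $f$ constant at the outset (so that $\varphi_a$ is well defined on $\ol{\mathbb{D}}$), and the monotonicity computation in the last paragraph — each of which is routine.
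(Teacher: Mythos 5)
Your proof is correct. The paper does not prove this lemma at all --- it is quoted as a known result (Pick's invariant form of the Schwarz lemma, cited from Krantz) --- and your argument is precisely the standard derivation: normalise with the automorphism $\varphi_a$, apply the classical Schwarz lemma to $\varphi_a\circ f$, and then bound $\left|\varphi_a^{-1}(w)\right|$ for $|w|\le r$; the monotonicity computation in $t=\mathrm{Re}(\ol{a}w)$ is carried out correctly (the derivative's numerator is indeed $2(1-r^2)(1-|a|^2)>0$), so nothing is missing.
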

\begin{lem}\cite{DP2008}\label{lem2} For an analytic function $f$ in $\mathbb{D}$ with $|f(z)|\leq1$,  we have 
\beas \frac{\left|f^{(n)}(z)\right|}{n!}\leq \frac{1-|f(z)|^2}{(1-|z|)^{n-1}(1-|z|^2)}\quad{and}\quad |a_n|\leq 1-|a_0|^2\;\;\text{for}\quad n\geq 1\quad \text{and}\quad |z|<1.\eeas\end{lem}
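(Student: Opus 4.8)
The plan is to obtain both assertions from the classical Schwarz lemma by moving the estimate from the origin to an arbitrary point of $\mathbb{D}$ by means of a disk automorphism. It is cleanest to prove the coefficient inequality $|a_n|\le 1-|a_0|^2$ first and then transplant it to obtain the derivative estimate; note that the coefficient inequality is itself the case $z=0$ of the derivative estimate, so one cannot reverse this order.

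\emph{Step 1: the coefficient bound.} Write $f(z)=\sum_{j\ge 0}a_jz^j$ with $\|f\|_\infty\le 1$. I would first reduce to the case $n=1$: the averaged function $f_n(z)=\frac1n\sum_{l=0}^{n-1}f(e^{2\pi i l/n}z)$ still satisfies $\|f_n\|_\infty\le 1$ by the triangle inequality, and by orthogonality of the $n$-th roots of unity it equals $\sum_{m\ge 0}a_{mn}z^{mn}$; hence $f_n(z)=\widetilde f(z^n)$ for some $\widetilde f\in\mathcal B$ with $\widetilde f(0)=a_0$ and $\widetilde f{}'(0)=a_n$. So it is enough to prove $|f'(0)|\le 1-|f(0)|^2$, and for this one applies the Schwarz lemma to the self-map $\phi=(f-a_0)/(1-\overline{a_0}f)$ of $\mathbb D$, which vanishes at the origin with $\phi'(0)=a_1/(1-|a_0|^2)$; then $|\phi'(0)|\le 1$ gives $|a_1|\le 1-|a_0|^2$. (This is the infinitesimal form of Lemma \ref{lem1}.)

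\emph{Step 2: the derivative estimate.} Fix $z\in\mathbb D$ and set $\varphi(u)=(z+u)/(1+\overline z u)$, an automorphism of $\mathbb D$ with $\varphi(0)=z$. Then $F:=f\circ\varphi\in\mathcal B$; writing $F(u)=\sum_{j\ge 0}A_ju^j$ we have $A_0=f(z)$, and Step 1 applied to $F$ yields $|A_j|\le 1-|f(z)|^2$ for all $j\ge 1$. Now perform the change of variables $w=\varphi(u)$ inside Cauchy's integral formula for $f^{(n)}(z)/n!$; using $w-z=u(1-|z|^2)/(1+\overline z u)$ and $dw=(1-|z|^2)(1+\overline z u)^{-2}du$, the integral collapses to
\beas \frac{f^{(n)}(z)}{n!}=\frac{1}{(1-|z|^2)^{n}}\,[u^n]\Big(F(u)(1+\overline z u)^{n-1}\Big)=\frac{1}{(1-|z|^2)^{n}}\sum_{i=0}^{n-1}\binom{n-1}{i}\,\overline{z}^{\,i}\,A_{n-i}.\eeas
Every index $n-i$ appearing here is at least $1$, so $|f^{(n)}(z)|/n!\le (1-|f(z)|^2)(1+|z|)^{n-1}/(1-|z|^2)^{n}$, and the elementary identity $(1+|z|)^{n-1}/(1-|z|^2)^{n}=1/\big((1-|z|)^{n-1}(1-|z|^2)\big)$ gives the claimed bound; taking $z=0$ recovers $|a_n|\le 1-|a_0|^2$.

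\emph{Main obstacle.} The only real subtlety is arranging the change of base point so that it does not explode: applying Fa\`a di Bruno directly to $f\circ\varphi^{-1}$ generates cross terms involving all of $f'(z),\dots,f^{(n)}(z)$, whereas carrying out the substitution \emph{inside the Cauchy integral} turns it into plain multiplication by the polynomial $(1+\overline z u)^{n-1}$ and a power of $(1-|z|^2)$. Beyond that everything is bookkeeping, and Step 2 is essentially forced once Step 1 is available.
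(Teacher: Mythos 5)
Your proof is correct; the paper offers no proof of this lemma at all, merely citing \cite{DP2008}, and your argument (the coefficient bound $|a_n|\le 1-|a_0|^2$ via rotation-averaging plus the Schwarz--Pick inequality at the origin, then transporting it to an arbitrary point with the automorphism $\varphi(u)=(z+u)/(1+\overline{z}u)$ carried out inside the Cauchy integral) is essentially the standard proof in that reference. The intermediate sharp form $\left(1-|f(z)|^2\right)(1+|z|)^{n-1}/(1-|z|^2)^n$ you derive is exactly the Schwarz--Pick-type estimate of Dai and Pan, and it coincides with the bound as stated in the lemma.
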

\begin{lem}\cite{KPS2018}\label{lem3} Let $h(z)=\sum_{n=0}^\infty a_nz^n$ and $g(z)=\sum_{n=0}^\infty b_nz^n$ be two analytic functions in $\mathbb{D}$ such that $|g’(z)|\leq k|h’(z)|$ in $\mathbb{D}$ for some $k\in [0,1)$. If $|h(z)|\leq 1$ in $\mathbb{D}$, then
\beas\sum_{n=1}^\infty |b_n|^2 r^n\leq k^2 \sum_{n=1}^\infty |a_n|^2 r^n\;\;\text{for}\;\;|z|= r<1.\eeas
\end{lem}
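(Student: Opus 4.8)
The plan is to deduce the stated coefficient inequality from the pointwise hypothesis in two moves: first replace $|g'(z)|\le k|h'(z)|$ by its $L^2$-average over the circles $|z|=\rho$, which produces a coefficient inequality carrying a factor $n^2$; then strip off that factor by two successive radial integrations. I would not pass to the quotient $g'/h'$ at all — the modulus bound $|g'(z)|^2\le k^2|h'(z)|^2$ on $\mathbb{D}$ is all that is used, so no discussion of zeros of $h'$ is needed.

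For the averaging step, integrate $|g'(z)|^2\le k^2|h'(z)|^2$ over $|z|=\rho$ and apply Parseval's identities $\frac{1}{2\pi}\int_0^{2\pi}|h'(\rho e^{i\theta})|^2\,d\theta=\sum_{n=1}^\infty n^2|a_n|^2\rho^{2n-2}$ and $\frac{1}{2\pi}\int_0^{2\pi}|g'(\rho e^{i\theta})|^2\,d\theta=\sum_{n=1}^\infty n^2|b_n|^2\rho^{2n-2}$ to obtain
\[
\sum_{n=1}^\infty n^2|b_n|^2\rho^{2n-2}\le k^2\sum_{n=1}^\infty n^2|a_n|^2\rho^{2n-2},\qquad 0\le\rho<1 .
\]
Substituting $s=\rho^2$ and writing $c_n:=k^2|a_n|^2-|b_n|^2$, this says precisely that $\phi(s):=\sum_{n=1}^\infty n^2 c_n s^{n-1}\ge 0$ for every $s\in[0,1)$. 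Since $h$ and $g$ are analytic in $\mathbb{D}$, the series $\sum n^2|a_n|^2 s^{n-1}$ and $\sum n^2|b_n|^2 s^{n-1}$ have radius of convergence at least $1$, so $\phi$ is continuous on $[0,1)$ and every term-by-term integration below is legitimate on compact subintervals.

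Now remove the factor $n^2$. Integrating $\phi$ from $0$ to $u$ gives $\int_0^u\phi(s)\,ds=\sum_{n=1}^\infty n c_n u^n\ge 0$ for $u\in[0,1)$. Dividing by $u>0$ — which is harmless because the series starts at $n=1$, so $\sum_{n=1}^\infty n c_n u^{n-1}$ is still a genuine power series and is bounded near $0$ — and integrating once more from $0$ to $r$ yields
\[
\int_0^r\left(\sum_{n=1}^\infty n c_n u^{n-1}\right)du=\sum_{n=1}^\infty c_n r^n\ge 0,\qquad 0\le r<1 .
\]
Unwinding the definition of $c_n$, this is exactly $\sum_{n=1}^\infty|b_n|^2 r^n\le k^2\sum_{n=1}^\infty|a_n|^2 r^n$, which is the assertion of the lemma.

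The argument is essentially mechanical, and I do not expect a genuine obstacle: the only points deserving care are the interchange of summation and integration (covered by local uniform convergence of the power series on $[0,1)$) and the division by $u$ in the intermediate step, which is valid exactly because the constant term is absent. The conceptual content is entirely concentrated in the first reduction — replacing the pointwise inequality by its circular $L^2$-average — after which the two radial integrations complete the proof with no further input; in particular the hypothesis $|h(z)|\le 1$ is not actually needed for this statement, only the analyticity of $h$ and $g$ in $\mathbb{D}$.
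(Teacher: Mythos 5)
Your argument is correct. Note first that the paper does not prove this statement at all: it is quoted verbatim from the cited source [KPS2018], so there is no in-paper proof to compare against. Your route is a clean, self-contained derivation and is in the same spirit as the standard one: integrate $|g'(z)|^2\le k^2|h'(z)|^2$ over the circle $|z|=\rho$ and use Parseval to get $\sum_{n\ge1} n^2|b_n|^2\rho^{2n-2}\le k^2\sum_{n\ge1} n^2|a_n|^2\rho^{2n-2}$, then remove the weight $n^2$ by two radial integrations (equivalently, by the fact that $1/n^2=\int_0^1 x^{n-1}\log(1/x)\,dx$ is a moment sequence, so multiplying a nonnegative power series coefficientwise by $1/n^2$ preserves nonnegativity on $[0,1)$). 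The technical points you flag are handled correctly: analyticity of $h$ and $g$ in $\mathbb{D}$ gives $\limsup|a_n|^{1/n}\le1$ and $\limsup|b_n|^{1/n}\le1$, so all series involved converge locally uniformly on $[0,1)$ and term-by-term integration is legitimate, and the division by $u$ is harmless since the constant term is absent. Your side remark is also accurate: the hypothesis $|h(z)|\le1$ plays no role in this particular inequality (it is used elsewhere in the paper, e.g.\ to get $|a_n|\le1-|a_0|^2$), and avoiding the quotient $g'/h'$ sidesteps any discussion of zeros of $h'$, which the pointwise modulus bound makes unnecessary.
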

\noindent Using the concept of quasi-subordination and the result derived in \cite{AKP2019}, Liu {\it et al.} \cite{LPW2020} established the following result.
\begin{lem}\cite[Proof of Theorem 2]{LPW2020}\label{lem30} Let $h(z)=\sum_{n=0}^\infty a_nz^n$ and $g(z)=\sum_{n=0}^\infty b_nz^n$ be two analytic functions in $\mathbb{D}$ such that $|g’(z)|\leq k|zh’(z)|$ in $\mathbb{D}$ for $k\in [0,1)$. Then,
\beas\sum_{n=1}^\infty n |b_n| r^{n-1}\leq k \sum_{n=1}^\infty n|a_n| r^n\quad\text{for}\quad |z|= r\leq 1/3.\eeas
\end{lem}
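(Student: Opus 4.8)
\textit{Proof proposal.} The plan is to extract from the hypothesis $|g'(z)|\le k|zh'(z)|$ an auxiliary analytic self‑map of $\mathbb{D}$, compare Taylor coefficients, and close the argument with the classical Bohr inequality (\ref{e2}) at radius $1/3$.

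First I would dispose of the degenerate cases. If $k=0$ the statement is trivial, and if $h'\equiv 0$ then $h$ is constant, the hypothesis forces $g'\equiv 0$, and both sides vanish. So assume $k\in(0,1)$ and $h'\not\equiv 0$, whence $zh'(z)\not\equiv 0$. Consider the meromorphic function $\phi:=g'/(zh')$ on $\mathbb{D}$. At any point $z_0$ where $zh'$ has a zero of order $\nu$, the estimate $|g'(z)|\le k|zh'(z)|$ forces $g'$ to vanish at $z_0$ to order at least $\nu$; hence every prospective pole of $\phi$ is removable and $\phi$ extends to an analytic function on $\mathbb{D}$. By continuity, the bound $|\phi(z)|\le k$, valid on the dense open set $\{zh'\ne 0\}$, holds throughout $\mathbb{D}$. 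Setting $\psi:=\phi/k$ we obtain an analytic $\psi$ with $\|\psi\|_\infty\le 1$ and the identity
\[
g'(z)=k\,z h'(z)\,\psi(z),\qquad z\in\mathbb{D};
\]
equivalently $g'\prec_q zh'$ with the identity as inner map, which is precisely the quasi‑subordination mechanism used in \cite{LPW2020} via \cite{AKP2019}.

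Next I would compare coefficients. Write $\psi(z)=\sum_{l=0}^\infty p_l z^l$. Since $zh'(z)=\sum_{m=1}^\infty m a_m z^m$, multiplying out $g'(z)=k\,(zh'(z))\psi(z)$ and reading off the coefficient of $z^N$ gives $b_1=0$ (the $z^0$ term) and, for $N\ge 1$,
\[
(N+1)b_{N+1}=k\sum_{m=1}^{N} m a_m\, p_{N-m}.
\]
Taking moduli, summing against $r^N$, and using $b_1=0$,
\[
\sum_{n=1}^\infty n|b_n| r^{n-1}=\sum_{N=1}^\infty (N+1)|b_{N+1}| r^{N}\le k\sum_{N=1}^\infty\Bigl(\sum_{m=1}^{N} m|a_m|\,|p_{N-m}|\Bigr) r^{N}.
\]
Interchanging the order of summation (all terms being nonnegative) and substituting $l=N-m$ yields
\[
\sum_{n=1}^\infty n|b_n| r^{n-1}\le k\sum_{m=1}^\infty m|a_m| r^{m}\Bigl(\sum_{l=0}^\infty |p_l| r^{l}\Bigr).
\]

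Finally, since $\psi$ is analytic in $\mathbb{D}$ with $\|\psi\|_\infty\le 1$, the classical Bohr inequality (\ref{e2}) gives $\sum_{l=0}^\infty|p_l|r^l\le 1$ for $r\le 1/3$. Substituting this bound gives $\sum_{n=1}^\infty n|b_n| r^{n-1}\le k\sum_{n=1}^\infty n|a_n| r^{n}$ for $r\le 1/3$, as desired. The only genuinely delicate point is the opening step — checking that $g'/(zh')$ is an honest bounded analytic function, i.e. that the zeros of $zh'$ are harmless; everything afterwards is coefficient bookkeeping sealed off by the $1/3$ theorem. If one prefers, steps two and three can be replaced by a direct appeal to the quasi‑subordination coefficient estimate of \cite{AKP2019} applied to $g'\prec_q zh'$.
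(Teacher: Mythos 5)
Your proof is correct and follows essentially the same route as the source the paper cites for this lemma (Liu--Ponnusamy--Wang via the quasi-subordination estimate of Alkhaleefah--Kayumov--Ponnusamy): you factor $g'(z)=k\,zh'(z)\psi(z)$ with $\Vert\psi\Vert_\infty\le 1$, i.e.\ $g'\prec_q k\,zh'$, and your Cauchy-product computation combined with the classical Bohr inequality at $r\le 1/3$ is precisely a self-contained proof of the special case of that quasi-subordination coefficient estimate needed here. The removable-singularity justification of the factorization and the observation $b_1=0$ are both handled correctly, so there is no gap.
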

\begin{lem}\cite{LLP2021}\label{lem5} Let $f$ be analytic in $\mathbb{D}$ such that $|f(z)|\leq1$ in $\mathbb{D}$. For any $N\in\mathbb{N}$, the following inequality holds:
\beas\sum_{n=N}^\infty |a_n| r^n+\text{sgn}(t)\sum_{n=1}^t |a_n|^2 \frac{ r^N}{1- r}+\left(\frac{1}{1+a_0}+\frac{ r}{1- r}\right)\sum_{n=t+1}^\infty |a_n|^2  r^{2n}\leq \frac{(1-|a_0|^2) r^N}{1- r}\eeas
for $r\in[0,1)$, where $t=\lfloor (N-1)/2\rfloor$ and $\lfloor x\rfloor$ denotes the largest integer not exceeding the real number $x$.
\end{lem}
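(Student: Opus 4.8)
The plan is to normalise, strip off the elementary Bohr majorant, and reduce the whole statement to one ``mass comparison'' inequality that is then closed using the Schwarz function attached to $f$.

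\textbf{Normalisation and first reduction.} Since $f\mapsto e^{-i\alpha}f(e^{i\beta}z)$ alters none of the quantities $|a_n|$ or $r$, we may assume $a_0=a:=|a_0|\in[0,1)$ (so that $1/(1+a_0)$ is to be read as $1/(1+a)$). By Lemma~\ref{lem2}, $|a_n|\le 1-a^2$ for $n\ge1$, hence the trivial estimate $\sum_{n\ge N}|a_n|r^n\le(1-a^2)r^N/(1-r)$; therefore the assertion is equivalent to
\be\label{P1}
\sum_{n=N}^{\infty}(1-a^2-|a_n|)r^n\ \ge\ \mathrm{sgn}(t)\frac{r^N}{1-r}\sum_{n=1}^{t}|a_n|^2+\Big(\frac{1}{1+a}+\frac{r}{1-r}\Big)\sum_{n=t+1}^{\infty}|a_n|^2r^{2n}.
\ee
Expanding $r^N/(1-r)$ and $r/(1-r)$ as geometric series, regrouping, and using the Parseval--Wiener bound $\sum_{n\ge1}|a_n|^2\le 1-a^2$ in the form $(1-a^2)-\sum_{n=1}^{t}|a_n|^2\ge\sum_{n\ge t+1}|a_n|^2$, a short rearrangement shows that \eqref{P1} follows once we establish
\be\label{P2}
\sum_{n=t+1}^{\infty}|a_n|^2\Big(\sum_{k=N}^{2n}r^k-\frac{r^{2n}}{1+a}\Big)\ \ge\ \sum_{n=N}^{\infty}|a_n|r^n .
\ee
The threshold $t=\lceil N/2\rceil-1$ enters precisely because $r^{2n}\le r^N\Longleftrightarrow n\ge t+1$, which is why the square-coefficients split into the block $1\le n\le t$ (only chargeable against the mild weight $r^N/(1-r)$) and the block $n\ge t+1$ (where the stronger weight $1/(1+a)+r/(1-r)$ can be afforded).

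\textbf{Closing the core inequality.} Inequality \eqref{P2} is not a consequence of the crude facts $|a_n|\le 1-a^2$ and $\sum|a_n|^2\le 1-a^2$ alone (the ``coefficient sequence'' $a_N=\sqrt{1-a^2}$, all other $a_n=0$, satisfies both bounds yet violates \eqref{P2} for small $r$ when $a>0$, although it is not realised by any $f$ with $\|f\|_\infty\le1$); the extra information is supplied by the Schwarz function $\varphi(z):=(f(z)-a)/(1-af(z))$, which satisfies $\varphi(0)=0$, $|\varphi|\le1$ in $\mathbb{D}$, and $f=a+(1-a^2)\,\varphi/(1+a\varphi)$, whence $a_n=(1-a^2)\big[\varphi/(1+a\varphi)\big]_n$ for $n\ge1$ and $\sum_{n\ge1}|[\varphi]_n|^2\le1$. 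The decisive feature is that this identity ties small and large coefficients together: if $a_1=\dots=a_{m-1}=0$ then $\varphi(z)=z^{m}\eta(z)$ with $\eta$ analytic and $|\eta|\le1$, so that $|a_n|\le(1-a^2)|\eta_{n-m}|$ for $m\le n<2m$ and, via $|\varphi/(1+a\varphi)|\le|z|^{m}/(1-a|z|^{m})$, the whole tail $\sum_{n\ge m}|a_n|r^n$ is controlled. Feeding these estimates into \eqref{P2}, treating the ranges $t+1\le n<N$ and $n\ge N$ separately, and summing the ensuing geometric series yields \eqref{P2}. Equality holds all the way through exactly when $\varphi(z)=z$, i.e. $f(z)=(a+z)/(1+az)$, the extremal map for the lemma, which simultaneously fixes the constant $1/(1+a)$ and the $r$-profile of the majorant.

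\textbf{Expected difficulty and the cases $N\le2$.} The one genuinely delicate step is \eqref{P2}: one must squeeze from the Schwarz representation just enough to dominate $\sum_{n\ge N}|a_n|r^n$ while keeping $1/(1+a)$ — and not the cruder $1$ — in front of $\sum_{n\ge t+1}|a_n|^2r^{2n}$, and this forces a split according to the parity of $N$ (for $N=2t+2$ the borderline index $n=t+1$ has $2n=N$, so its contribution degenerates when $a=0$, whereas for $N=2t+1$ it has $2n=N+1$, so the two cases must be bookkept differently). When $N\in\{1,2\}$ one has $t=0$, the middle term in \eqref{P1} disappears, and the statement collapses to the known refined form of Bohr's inequality due to Kayumov and Ponnusamy (the case $N=1$), which can be quoted outright.
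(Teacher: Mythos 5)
The paper offers no proof of this lemma to compare against: it is quoted directly from \cite{LLP2021}. Judged on its own, your proposal breaks at the reduction step. The implication ``(P2) together with Parseval gives (P1)'' is correct, but in passing to (P2) you bound the available slack $(1-|a_0|^2)-\mathrm{sgn}(t)\sum_{n=1}^{t}|a_n|^2$ from below by $\sum_{n\ge t+1}|a_n|^2$; this is only efficient when $\sum_{n\ge 0}|a_n|^2$ is close to $1$, and in general it discards almost the whole budget. Afterwards (P2) asks a quantity that is quadratic in the tail coefficients to dominate the first-order sum $\sum_{n\ge N}|a_n|r^n$, and that is false for genuine members of $\mathcal{B}$, not only for artificial coefficient sequences. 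Take
\[
f(z)=\frac{a+\epsilon z^{N}}{1+a\epsilon z^{N}},\qquad 0<a<1,\ \epsilon>0\ \text{small},
\]
so that $a_{0}=a$, $a_{N}=(1-a^{2})\epsilon$, and all other nonzero coefficients are $O(\epsilon^{2})$. The left-hand side of (P2) is $O(\epsilon^{2})$ while the right-hand side equals $(1-a^{2})\epsilon r^{N}+O(\epsilon^{2})$, so (P2) fails for every fixed $r\in(0,1)$ once $\epsilon$ is small; concretely, for $N=3$, $a=1/2$, $r=1/10$, $\epsilon=1/100$ the two sides are about $6\times10^{-8}$ versus $7.5\times10^{-6}$. (The lemma itself holds with huge slack for this $f$; the loss is entirely in your Parseval discard.) Because (P2) is false, the second half of your argument cannot be completed no matter how the Schwarz function of $f$ is exploited — and that half is in any case only a sketch, with no estimate actually carried out.

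A workable proof (as in \cite{LLP2021}) must keep the linear tail $\sum_{n\ge N}|a_n|r^{n}$ played off against the full budget $(1-|a_0|^2)r^{N}/(1-r)$, absorbing the square terms through refined coefficient inequalities obtained from the representation $f=(a_0+\varphi)/(1+\overline{a_0}\,\varphi)$ with $\varphi(0)=0$, rather than asking the quadratic tail alone to dominate a first-order sum. Two further points: quoting the ``known refined Bohr inequality'' to settle $N=1$ is circular, since that statement is precisely the $N=1$ case of the lemma being proved; and the closing sharpness discussion ($\varphi(z)=z$) is beside the point, as the lemma as stated makes no sharpness claim.
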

\section{Main results}
\noindent In the subsequent result, we obtain a sharp version of \textrm{Theorem C} in the context of harmonic mappings, incorporating multiple Schwarz functions.
\begin{theo}\label{T6} Let $f(z)=h(z)+\ol{g(z)}=\sum_{n=0}^\infty a_n z^n+\ol{\sum_{n=2}^\infty b_n z^n}$ be a sense-preserving, $K$-quasiconformal harmonic mapping in $\mathbb{D}$, and $\omega_m\in\mathcal{B}_m$ for $m\geq 1$. If $h(z)$ is bounded in $\mathbb{D}$, then
\beas\sum_{n=0}^\infty |a_n| |\omega_p(z)|^n+\sum_{n=2}^\infty |b_n| |\omega_m(z)|^n \leq \Vert h(z)\Vert_\infty\quad\text{for $ r\leq \min\left\{r_{p,m,k},\;1/\sqrt[m]{3}\right\}$,}\eeas
where 
$ r_{p,m,k}\in(0,1)$ is the smallest positive root of the equation 
\beas \frac{2r^{p}}{1- r^p}+ 2k\left(\frac{ r^{m}}{1- r^m}+\log(1- r^m)\right)-1=0,\eeas
where $k=(K-1)/(K+1)$. If $r_{p,m,k}\leq 1/\sqrt[m]{3}$, then the number $r_{p,m,k}$ cannot be improved.
\end{theo}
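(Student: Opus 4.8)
The plan is to normalise $\Vert h\Vert_\infty=1$ and first strip off the Schwarz functions. Since $\omega_p\in\mathcal B_p$ and $\omega_m\in\mathcal B_m$, applying the Schwarz lemma to $\omega_p(z)/z^p$ and $\omega_m(z)/z^m$ gives $|\omega_p(z)|\le r^p$ and $|\omega_m(z)|\le r^m$ for $|z|=r$, so it suffices to prove $\sum_{n=0}^\infty|a_n|r^{pn}+\sum_{n=2}^\infty|b_n|r^{mn}\le 1$ on the claimed range. For the analytic part I would use the bound $|a_n|\le 1-|a_0|^2$ ($n\ge 1$) from Lemma \ref{lem2}, which gives $\sum_{n=0}^\infty|a_n|r^{pn}\le|a_0|+(1-|a_0|^2)\,r^p/(1-r^p)$.

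The crux is the co-analytic part. Because $g$ vanishes to order $2$, $g'(0)=0$, so the dilatation $\omega=g'/h'$ — which is analytic in $\mathbb D$ (as $f$ is sense-preserving, $h'\neq0$), satisfies $|\omega|\le k=(K-1)/(K+1)$, and has $\omega(0)=g'(0)/h'(0)=0$ — obeys $|\omega(z)|\le k|z|$ by Lemma \ref{lem1}, whence $|g'(z)|\le k|z|\,|h'(z)|$ throughout $\mathbb D$. Lemma \ref{lem30} then yields $\sum_{n\ge 2}n|b_n|s^{n-1}\le k\sum_{n\ge 1}n|a_n|s^{n}$ for $0\le s\le 1/3$ (the left sum starting at $n=2$ since $b_1=0$). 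Integrating this pointwise inequality in $s$ over $[0,t]$ with $t:=r^m$ — legitimate exactly when $t\le 1/3$, i.e. $r\le 1/\sqrt[m]{3}$ — gives $\sum_{n\ge 2}|b_n|t^{n}\le k\sum_{n\ge 1}\frac{n}{n+1}|a_n|t^{n+1}$; bounding $|a_n|\le 1-|a_0|^2$ and using $\sum_{n\ge 1}\frac{n}{n+1}t^{n}=\frac{t}{1-t}+\frac{\log(1-t)}{t}+1$ together with the simplification $\frac{t^2}{1-t}+t=\frac{t}{1-t}$, this collapses to $\sum_{n\ge 2}|b_n|r^{mn}\le k(1-|a_0|^2)\big(\frac{r^m}{1-r^m}+\log(1-r^m)\big)$. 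This is the step I expect to be the main obstacle: spotting that the second-order vanishing of $g$ upgrades $|g'|\le k|h'|$ to $|g'|\le k|z|\,|h'|$ (the very fact that makes Lemma \ref{lem30} applicable and that forces the $1/\sqrt[m]{3}$ threshold), and then carrying out the integration and resummation so that the logarithm appears in precisely the form $\frac{r^m}{1-r^m}+\log(1-r^m)=\sum_{n\ge 2}\frac{n-1}{n}r^{mn}$.

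Adding the two estimates and writing $A=|a_0|\in[0,1]$ and $C(r)=\frac{r^p}{1-r^p}+k\big(\frac{r^m}{1-r^m}+\log(1-r^m)\big)$, the left-hand side is at most $A+(1-A^2)C(r)$; since $A+(1-A^2)C(r)-1=(1-A)\big((1+A)C(r)-1\big)$, this is $\le 1$ for every $A\in[0,1]$ precisely when $C(r)\le 1/2$, equivalently $\Psi(r):=\frac{2r^p}{1-r^p}+2k\big(\frac{r^m}{1-r^m}+\log(1-r^m)\big)-1\le 0$. Because $\frac{r^m}{1-r^m}+\log(1-r^m)$ has nonnegative power-series coefficients, $\Psi$ is strictly increasing on $(0,1)$ with $\Psi(0)=-1$ and $\Psi(1^-)=+\infty$, so it has a unique (hence smallest) positive root $r_{p,m,k}$ and $\Psi\le 0$ on $(0,r_{p,m,k}]$; intersecting with $r\le 1/\sqrt[m]{3}$ yields the asserted range.

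For sharpness when $r_{p,m,k}\le 1/\sqrt[m]{3}$, I would take $h(z)=(a-z)/(1-az)$ with $a\in(0,1)$, define $g$ by $g'(z)=k\,z\,h'(z)$ with $g(0)=0$ (so the dilatation is $\omega=g'/h'=kz$, hence $|g'|\le k|h'|$, and $g$ begins at $z^2$), and choose $\omega_p(z)=z^p$, $\omega_m(z)=z^m$; then $f=h+\ol g$ is a sense-preserving $K$-quasiconformal harmonic mapping with $\Vert h\Vert_\infty=1$. A short computation gives $|a_n|=(1-a^2)a^{n-1}$ and $|b_n|=k(1-a^2)\frac{n-1}{n}a^{n-2}$; writing $E(a)=\sum_{n\ge 1}a^{n-1}r^{pn}+k\sum_{n\ge 2}\frac{n-1}{n}a^{n-2}r^{mn}$ (so $E(1)=C(r)$), the left-hand side evaluated at $z=r$ equals $a+(1-a^2)E(a)=1+(1-a)\big((1+a)E(a)-1\big)$, and $(1+a)E(a)-1\to\Psi(r)$ as $a\to1^-$. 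When $r>r_{p,m,k}$ (and still $r\le 1/\sqrt[m]{3}$) one has $\Psi(r)>0$, so this expression exceeds $1$ for $a$ sufficiently close to $1$; hence $r_{p,m,k}$ cannot be replaced by any larger number.
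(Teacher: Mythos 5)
Your proposal is correct and follows essentially the same route as the paper: the Schwarz-lemma upgrade $|g'(z)|\le k|z||h'(z)|$ coming from $b_1=0$, Lemma \ref{lem30} integrated on $[0,r^m]$ (forcing $r\le 1/\sqrt[m]{3}$) to produce the $\frac{r^m}{1-r^m}+\log(1-r^m)$ term, the coefficient bound $|a_n|\le 1-|a_0|^2$, the reduction $a+(1-a^2)C(r)=1+(1-a)\big((1+a)C(r)-1\big)$ with the worst case $a\to 1^-$, and the extremal family $h(z)=(a\pm z)/(1\pm az)$, $g'=kzh'$, $\omega_p(z)=z^p$, $\omega_m(z)=z^m$ for sharpness. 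The only differences (integrating before inserting $|a_n|\le 1-a^2$ and resumming the series, phrasing the $a$-step as a supremum rather than monotonicity of $H_2$) are cosmetic.
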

\begin{proof}
For simplicity, let $\Vert h(z)\Vert_\infty\leq 1$. In view of \textrm{Lemma \ref{lem2}}, we have $|a_n|\leq 1-|a_0|^2$ for $n\geq 1$.
Since $f$ is locally univalent and $K$-quasiconformal sense-preserving harmonic mapping in $\mathbb{D}$ with $g'(0)=b_1=0$, the classical Schwarz Lemma gives that the dilatation 
$\omega=g'/h'$ is analytic in $\mathbb{D}$ and $|\omega(z)|\leq k|z|$, {\it i.e.}, $|g'(z)|\leq k|zh'(z)|$ in $\mathbb{D}$, where $k =(K-1)/(K+1)\in[0,1)$.  
In view of \textrm{Lemma \ref{lem30}}, we have
\bea\label{p2} \sum_{n=2}^\infty n |b_n| r^{n-1}\leq k \sum_{n=1}^\infty n|a_n| r^n\leq k(1-a^2)\sum_{n=1}^\infty n r^n=k(1-a^2)\frac{ r}{(1- r)^2}\eea
for $|z|= r\leq 1/3$, where $|a_0|=a\in[0,1)$. Integrate (\ref{p2}) on $[0,  r]$, we have 
\be\label{p6}\sum_{n=2}^\infty |b_n| r^{n}\leq k(1-a^2)\int_{0}^ r\frac{x}{(1-x)^2} dx=k(1-a^2)\left(\frac{ r}{1- r}+\log(1- r)\right)\;\;\text{for}\;\; r\leq \frac{1}{3}.\ee
It is evident that $H_1(r)=r/(1-r)+\log(1-r)$ is a monotonically increasing functions of $r$ and it follows that $H_1(r)\geq H_1(0)=0$. As $\omega_m\in\mathcal{B}_m$, in view of the classical Schwarz Lemma, we have $|\omega_m(z)|\leq |z|^m$.
Using (\ref{p6}), we have 
\beas \sum_{n=2}^\infty |b_n| |\omega_m(z)|^n\leq \sum_{n=2}^\infty |b_n| r^{mn}\leq k(1-a^2)\left(\frac{ r^m}{1- r^m}+\log(1- r^m)\right)\;\;\text{for}\;\; r\leq \frac{1}{\sqrt[m]{3}}.\eeas
Similarly, we obtain
\beas \sum_{n=1}^\infty |a_n| |\omega_p(z)|^n\leq \sum_{n=1}^\infty |a_n| r^{pn} \leq (1-a^2)\frac{ r^{p}}{1- r^p}.\eeas
Therefore, we have
\beas&&\sum_{n=0}^\infty |a_n| |\omega_p(z)|^n+\sum_{n=2}^\infty |b_n| |\omega_m(z)|^n\nonumber\\
&\leq&a+(1-a^2)\frac{ r^{p}}{1- r^p}+ k(1-a^2)\left(\frac{ r^{m}}{1- r^m}+\log(1- r^m)\right)\nonumber\\[2mm]
&=&1+(1-a) H_2(a, r),\eeas
where 
\beas H_2(a, r)=\frac{(1+a)r^{p}}{1- r^p}+ k(1+a)\left(\frac{ r^{m}}{1- r^m}+\log(1- r^m)\right)-1\eeas
and the first inequality holds for any $r\leq 1/\sqrt[m]{3}$.
Differentiating $H_2(a, r)$ partially with respect to $a$, we obtain 
\beas\frac{\pa}{\pa a}H_2(a, r)&=&\frac{r^{p}}{1- r^p}+ k\left(\frac{ r^{m}}{1- r^m}+\log(1- r^m)\right)\geq 0.\eeas
This shows that $H_2(a, r)$ is a monotonically increasing function of $a\in[0,1)$ and
\beas H_2(a, r)\leq \lim_{r\to1^-}H_2(a, r)=\frac{2r^{p}}{1- r^p}+ 2k\left(\frac{ r^{m}}{1- r^m}+\log(1- r^m)\right)-1\leq 0\eeas 
for $r\leq  r_{p,m,k}$, where $ r_{p,m,k}\in(0,1)$ is the smallest positive root of the equation 
\bea\label{p3} G_{p,m,k}(r):=\frac{2r^{p}}{1- r^p}+ 2k\left(\frac{ r^{m}}{1- r^m}+\log(1- r^m)\right)-1=0,\eea
where $k=(K-1)/(K+1)$. Therefore,
\beas\sum_{n=0}^\infty |a_n| |\omega_p(z)|^n+\sum_{n=2}^\infty |b_n| |\omega_m(z)|^n\leq 1\quad\text{for}~r\leq \min\left\{1/\sqrt[m]{3},\; r_{p,m,k}\right\}.\eeas
\indent We show that if $r_{p,m,k}\leq 1/\sqrt[m]{3}$, then the number $r_{p,m,k}$ cannot be improved. 
Consider the function $f_0(z)=h_0(z)+\ol{g_0(z)}$ in $\mathbb{D}$ such that 
\beas h_0(z)=\frac{a+z}{1+az}=A_0+\sum_{n=1}^\infty A_n z^n\quad\text{and}\quad g_0'(z)=\mu k zh_4'(z)\eeas
with $\omega_m(z)=z^m$ for $m\geq 1$, where $A_0=a$, $A_n=(1-a^2)(-a)^{n-1}$ for $n\geq 1$, $a\in[0,1)$, $|\mu|=1$ and $k=(K-1)/(K+1)$. If $g_0(z)=\sum_{n=2}^\infty B_n z^n$, then
\beas B_n=k\mu \left(\frac{n-1}{n}\right)(1-a^2)(-a)^{n-2}\quad\text{ for}\quad n\geq 2.\eeas
Therefore, we have
\beas&& \sum_{n=0}^\infty |A_n| r^{pn}+\sum_{n=2}^\infty |B_n| r^{mn}\\[2mm]
&=&a+(1-a^2) r^p\sum_{n=1}^\infty (a r^p)^{n-1}+(1-a^2)k r^{2m}\sum_{n=2}^\infty \frac{n-1}{n}(a r^m)^{n-2}\\[2mm]
&=&a+\frac{(1-a^2) r^p}{1-a r^p}+(1-a^2)k\frac{a r^m+(1-a r^m )\log \left(1-a r^m\right)}{a^2 \left(1-a r^m\right)}\\[2mm]
&=&1+(1-a)H_3(a, r),\eeas
where 
\beas H_3(a, r)=\frac{(1+a) r^p}{1-a r^p}+(1+a)k\frac{a r^m+(1-a r^m )\log \left(1-a r^m\right)}{a^2 \left(1-a r^m\right)}-1.\eeas
It is evident that
\beas \lim_{a\to1^-} H_3(a, r)=\frac{2r^{p}}{1- r^p}+ 2k\left(\frac{ r^{m}}{1- r^m}+\log(1- r^m)\right)-1>0\eeas
for $ r> r_{p,m,k}$, where $k=(K-1)/(K+1)$. This shows that the number $r_{p,m,k}$ cannot be improved.
This completes the proof.
\end{proof}
\begin{rem} Whenever $m\geq p$, then $r_{p,m,k}$ is the unique positive root of equation (\ref{p3}), and $r_{p,m,k}\leq 1/\sqrt[m]{3}$.
It is evident that
\beas G_{p,m,k}'(r)=\frac{2 \left(k m\; r^{2 m-1} \left(1-r^p\right)^2+p \left(1-r^m\right)^2 r^{p-1}\right)}{\left(1-r^m\right)^2 \left(1-r^p\right)^2}\geq0\quad\text{for $ r\in[ 0,1)$, }\eeas
which shows that $G_{p,m,k}( r)$ is a monotonically increasing function of $r$ with $G_{p,m,k}(0)=-1$ and 
\beas G_{p,m,k}(1/\sqrt[m]{3})&=&\left(\frac{2\left(\frac{1}{3}\right)^{p/m}}{1- \left(\frac{1}{3}\right)^{p/m}}-1\right)+2k(1/2+\log(2/3))\\[2mm]
&=&\frac{3\left(\left(\frac{1}{3}\right)^{p/m}-\frac{1}{3}\right)}{1- \left(\frac{1}{3}\right)^{p/m}}+2k(1/2+\log(2/3)).\eeas
Note that $(1/3)^{p/m}\geq (1/3)$ implies $(1/3)^{p}\geq (1/3)^m$ and the function $a^x=\exp(x\log(a))$ is a strictly decreasing function of $x$ for $0<a<1$, thus, we have $p\leq m$. Therefore, $G_{p,m,k}(1/\sqrt[m]{3})>0$ for $m\geq p$.
Therefore, $r_{p,m,k}\in\left(0,1/\sqrt[m]{3}\right)$ is the unique root of the equation (\ref{p3}), whenever $m\geq p$.
\end{rem}
\begin{rem}\label{rem2}
We discuss some special cases of \textrm{Theorem \ref{T6}} as well as several useful observations.
\begin{enumerate}
\item[(i)] By setting $p=m=1$ and $\omega_1(z)=z$ in \textrm{Theorem \ref{T6}}, we obtain the sharp version of \textrm{Theorem C}, {\it i.e.,}
\beas\sum_{n=0}^\infty |a_n| r^n+\sum_{n=2}^\infty |b_n| r^n \leq \Vert h(z)\Vert_\infty\quad\text{for $ r\leq  r_0$,}\eeas
where 
$r_0\in(0,1/3)$ is the unique positive root of the equation 
\beas \frac{2r}{1- r}+ \frac{2(K-1)}{(K+1)}\left(\frac{ r}{1- r}+\log(1- r)\right)-1=0.\eeas
The number $r_0$ is sharp.  This provides a conclusive answer to Question \ref{Q2}. The radius $r_0$ is now characterized by a single equation, which simplifies the two-sided estimate found in the original \textrm{Theorem C}.
\item[(ii)] By setting $p=1$ and allowing $m\to\infty$ in \textrm{Theorem \ref{T6}}, we derive the classical Bohr inequality. This demonstrates that our result on harmonic mapping inherently includes the classical Bohr inequality as a limiting case, thereby connecting the two significant research domains.
\end{enumerate}
\end{rem}
\noindent As shown in Table \ref{tab0}, the upper bound $1 /\sqrt[m] {3}$ increases as $m$ becomes larger. This sets the stage for understanding the specific radii $r_{p,m,k}$ calculated in Figure \ref{fig0}.
\begin{table}[H]
\centering
\caption{The values of $1/\sqrt[m]{3}$ for different values of $m\in\mathbb{N}$}
\label{tab0}
\begin{tabular}{*{6}{|c}|}
\hline
$m$&			1					&2&3				&5									&7					\\
\hline
$1/\sqrt[m]{3}$&	1/3		&0.57735&0.693361	&0.802742						&	0.854751\\
\hline
\end{tabular}
\end{table}
\noindent In Figure \ref{fig0}, we obtain the location of $r_{p,m,k}$ for certain values of $m, p\in\mathbb{N}$ and $k\in[0,1)$.
\begin{figure}[H]
\begin{minipage}[c]{1\linewidth}
\centering
\includegraphics[scale=0.9]{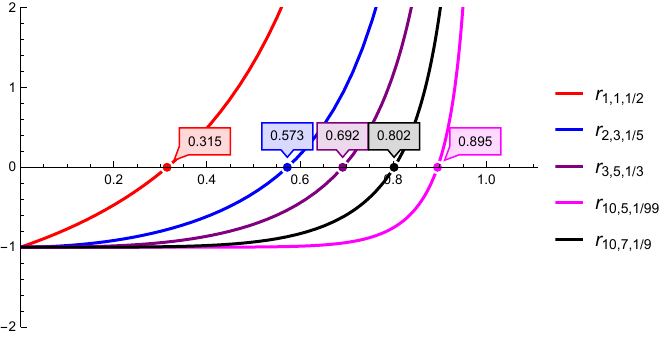}
\end{minipage}\hspace{0.5cm}
\begin{minipage}[c]{1\linewidth}
\centering
\includegraphics[scale=0.9]{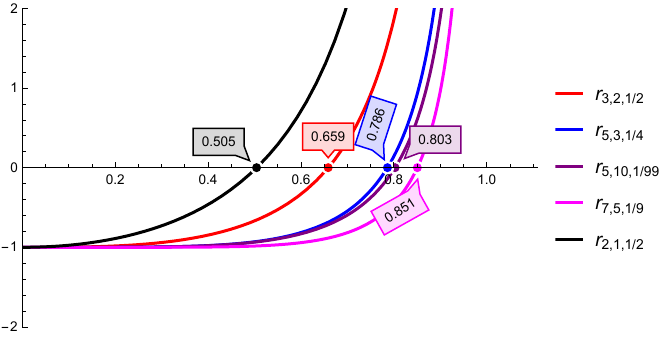}
\end{minipage}
\caption{The position of $r_{p,m,k}$, the smallest root of equation (\ref{p3}) in $(0, 1)$, varies with different values of $p, m$, and $k$}
\label{fig0}
\end{figure}
\noindent The subsequent result presents an improved version of the Bohr inequality for harmonic mappings, incorporating multiple Schwarz functions.
\begin{theo}\label{TT1} Let $f(z)=h(z)+\ol{g(z)}=\sum_{n=0}^\infty a_n z^n+\ol{\sum_{n=1}^\infty b_n z^n}$ be a sense-preserving, $K$-quasiconformal harmonic mapping in $\mathbb{D}$, and $\omega_m\in\mathcal{B}_m$ for $m\geq 1$. If $\Vert h(z)\Vert_\infty\leq 1$ in $\mathbb{D}$, then
\beas |h(\omega_m(z))|+|h'(\omega_m(z))||\omega_q(z)|+\sum_{n=2}^\infty |a_n||\omega_p(z)|^n+\sum_{n=1}^\infty |b_n||\omega_p(z)|^n\leq 1\eeas
for $r\leq r_{m,p,q,k}\leq R_{m,q}$, where $R_{m,q}\in(0,1)$ is the unique root of the equation $r^{2m}+2r^q-1=0$ and $r_{m,p,q,k}\in(0,1)$ is the smallest positive root of the equation 
\beas \frac{2r^q}{1+r^m}+\frac{2(k+ r^p)(1+r^m)r^p}{1- r^p} -(1-r^m)=0,\eeas
where $k=(K-1)/(K +1)$. The radius $r_{m,p,q,k}$ cannot be improved.
\end{theo}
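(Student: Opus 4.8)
The plan is to carry out the usual reduction to a one-variable inequality in $a:=|a_0|$, being careful to use the sharp Schwarz--Pick estimate on the term $|h'(\omega_m(z))||\omega_q(z)|$ so that the stated equation emerges. Assuming $\Vert h\Vert_\infty\le1$, Lemma \ref{lem2} gives $|a_n|\le1-a^2$ for $n\ge1$; the Schwarz lemma gives $|\omega_m(z)|\le r^m$, $|\omega_p(z)|\le r^p$ and $|\omega_q(z)|\le r^q$ for $|z|=r$; and sense-preserving $K$-quasiconformality gives $|g'(z)|\le k|h'(z)|$ with $k=(K-1)/(K+1)$.

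First I would estimate the ``non-series'' part. Writing $w=\omega_m(z)$ and applying the case $n=1$ of Lemma \ref{lem2} together with $|w|\le r^m$ gives $|h'(w)|\le(1-|h(w)|^2)/(1-r^{2m})$, hence
\[
|h(\omega_m(z))|+|h'(\omega_m(z))||\omega_q(z)|\le \psi\big(|h(w)|\big),\qquad \psi(t):=t+\frac{(1-t^2)r^q}{1-r^{2m}}.
\]
Since $\psi'(t)=1-2tr^q/(1-r^{2m})\ge0$ for all $t\in[0,1]$ precisely when $r^{2m}+2r^q\le1$, i.e. for $r\le R_{m,q}$, on that range $\psi$ is increasing; combined with Lemma \ref{lem1} ($|h(w)|\le s:=(a+r^m)/(1+ar^m)$) and the identity $1-s^2=(1-a^2)(1-r^{2m})/(1+ar^m)^2$, this bounds the left side above by $\frac{a+r^m}{1+ar^m}+\frac{(1-a^2)r^q}{(1+ar^m)^2}$. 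For the series, $|a_n|\le1-a^2$ gives $\sum_{n=2}^\infty|a_n||\omega_p(z)|^n\le(1-a^2)r^{2p}/(1-r^p)$, while Lemma \ref{lem3} applied with $r$ replaced by $r^p$, followed by the Cauchy--Schwarz inequality, gives $\sum_{n=1}^\infty|b_n||\omega_p(z)|^n\le k(1-a^2)r^p/(1-r^p)$. Summing the three estimates rewrites the left-hand side as $1+(1-a)\Psi(a,r)$ with
\[
\Psi(a,r)=-\frac{1-r^m}{1+ar^m}+\frac{(1+a)r^q}{(1+ar^m)^2}+(1+a)\frac{r^p(r^p+k)}{1-r^p},
\]
so it remains to show $\Psi(a,r)\le0$ for $a\in[0,1)$ whenever $r\le r_{m,p,q,k}$.

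The plan for that is to establish $\partial_a\Psi\ge0$, which forces $\Psi(a,r)\le\lim_{a\to 1^-}\Psi(a,r)$; multiplying this limit by $1+r^m$ produces exactly $G_{m,p,q,k}(r):=\frac{2r^q}{1+r^m}+\frac{2(k+r^p)(1+r^m)r^p}{1-r^p}-(1-r^m)$, the function whose smallest positive root defines $r_{m,p,q,k}$. Because $G_{m,p,q,k}(0)=-1<0$ and $G_{m,p,q,k}$ is continuous, $G_{m,p,q,k}\le0$ on $[0,r_{m,p,q,k}]$; and since at $r=R_{m,q}$ the relation $2r^q=(1-r^m)(1+r^m)$ gives $G_{m,p,q,k}(R_{m,q})=2(k+R_{m,q}^{p})(1+R_{m,q}^{m})R_{m,q}^{p}/(1-R_{m,q}^{p})>0$, it follows that $r_{m,p,q,k}<R_{m,q}$, so the whole range $r\le r_{m,p,q,k}$ lies inside $r\le R_{m,q}$ --- which is precisely what validates the monotonicity of $\psi$ used above. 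The one genuinely computational point, and the step I expect to be the main obstacle, is $\partial_a\Psi\ge0$: the middle term $(1+a)r^q/(1+ar^m)^2$ is not itself monotone in $a$, so one must combine its $a$-derivative with that of $-(1-r^m)/(1+ar^m)$ over the common denominator $(1+ar^m)^3$ and check that the resulting numerator, which turns out to be affine in $a$, is nonnegative at $a=0$ and $a=1$ on the range $r\le R_{m,q}$; the term $(1+a)r^p(r^p+k)/(1-r^p)$ contributes only a nonnegative derivative.

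For sharpness I would test $f_0=h_0+\overline{g_0}$ with $h_0(z)=(a+z)/(1+az)$, $g_0'(z)=\mu k\,h_0'(z)$ where $|\mu|=1$, and $\omega_m(z)=z^m$, $\omega_p(z)=z^p$, $\omega_q(z)=z^q$. Then $a_n=(1-a^2)(-a)^{n-1}$ and $b_n=\mu k(1-a^2)(-a)^{n-1}$, so a direct summation of the four now-geometric terms at $z=r\in(0,1)$ gives the value $1+(1-a)H(a,r)$ with $\lim_{a\to 1^-}H(a,r)=G_{m,p,q,k}(r)/(1+r^m)$. Since $G_{m,p,q,k}$ is negative just below $r_{m,p,q,k}$ and tends to $+\infty$ as $r\to 1^-$, it is strictly positive immediately to the right of $r_{m,p,q,k}$; hence for such $r$ and $a$ close to $1$ the left-hand side exceeds $1$, which shows $r_{m,p,q,k}$ cannot be replaced by any larger radius.
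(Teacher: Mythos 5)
Your proposal is correct and follows essentially the same route as the paper: the same use of Lemmas \ref{lem1}--\ref{lem3} with Cauchy--Schwarz, the same monotonicity trick for $t\mapsto t+\alpha(1-t^2)$ (valid exactly for $r\le R_{m,q}$, i.e. $\alpha\le 1/2$), the same reduction to a function of $a$ whose limit as $a\to1^-$ yields the defining equation, the same comparison showing $r_{m,p,q,k}\le R_{m,q}$, and the same extremal family $h(z)=(a+z)/(1+az)$, $g'=\mu k h'$ with $\omega_j(z)=z^j$. The only real difference is organizational: you keep the factor $1/(1+ar^m)$ inside your $\Psi$, so proving $\partial_a\Psi\ge0$ requires the common-denominator endpoint check you describe (which does go through under $r\le R_{m,q}$), whereas the paper writes the bound as $1+(1-a)F_1(a,r)/(1+ar^m)$ with $F_1=(1+ar^m)\Psi$, for which $\partial_a F_1\ge0$ is immediate with no restriction on $r$; also, in the sharpness step your inference that $G_{m,p,q,k}>0$ just to the right of the root should be justified by its strict monotonicity on $[0,R_{m,q}]$ (where $2r^q\le(1+r^m)^2$) together with positivity beyond $R_{m,q}$, rather than by the behavior as $r\to1^-$ alone.
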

\begin{proof}
As $f$ is locally univalent and $K$-quasiconformal sense-preserving harmonic mapping in $\mathbb{D}$, the classical Schwarz lemma gives that the dilatation $\omega=g'/h'$ is 
analytic in $\mathbb{D}$ and $|\omega(z)|\leq k$, {\it i.e.}, $|g'(z)|\leq k|h'(z)|$ in $\mathbb{D}$, where $k = (K-1)/(K+1)\in[0,1)$. Let $|a_0|=a\in[0,1)$.
In view of \textrm{Lemmas \ref{lem2}} and \ref{lem3}, we have $|a_n|\leq 1-a^2$ for $n\geq1$ and
\bea\label{f1} \sum_{n=1}^\infty |b_n|^2 r^n\leq k^2 \sum_{n=1}^\infty |a_n|^2 r^n\leq k^2(1-a^2)^2\frac{r}{1- r}.\eea
Using (\ref{f1}) and the Cauchy-Schwarz inequality, we have 
\bea\label{f2} \sum_{n=1}^\infty |b_n| r^n\leq \left(\sum_{n=1}^\infty |b_n|^2 r^n\right)^{1/2}\left(\sum_{n=1}^\infty r^n\right)^{1/2} \leq k(1-a^2)\frac{ r}{1- r}.\eea
As $\omega_m\in\mathcal{B}_m$, in view of the classical Schwarz Lemma, we have $|\omega_m(z)|\leq |z|^m$. Using (\ref{f2}), we have
\bea\label{f4} &&\sum_{n=1}^\infty |b_n| |\omega_p(z)|^n\leq \sum_{n=1}^\infty |b_n| r^{pn} \leq k(1-a^2)\frac{ r^p}{1- r^p}\\[2mm]\text{and}
\label{f5}&&\sum_{n=2}^\infty |a_n| |\omega_p(z)|^n\leq \sum_{n=2}^\infty |a_n| r^{pn} \leq (1-a^2)\frac{ r^{2p}}{1- r^p}.\eea
In view of \textrm{Lemmas \ref{lem1}} and \ref{lem2}, we have 
\beas |h(z)|\leq \frac{|h(0)|+|z|}{1+|h(0)||z|}= \frac{a+|z|}{1+a|z|}\quad\text{and}\quad \left|h'(z)\right|\leq \frac{1-|h(z)|^2}{1-|z|^2}.\eeas
Let $G_1(t)=(a+t)/(1+a t)$ and $G_2(t)=1/(1-t^2)$, where $0\leq t\leq \beta\; (\leq1)$ and $a\geq 0$. It is evident that $G_1'(t)=(1-a^2)/(1+at)^2\geq 0$, which shows that $G_1(t)$ is a monotonically increasing functions of $t\in[0,\beta]$. Hence, we have $G_1(t)\leq G_1(\beta)$. Similarly, we have $G_2(t)\leq G_2(\beta)$.
Thus, for $|z|=r<1$, we have 
\bea\label{f6} &&|h(\omega_m(z))|\leq \frac{a+|\omega_m(z)|}{1+a|\omega_m(z)|}\leq \frac{a+r^m}{1+ar^m}\\[2mm]\text{and}
\label{f7}&&\left|h'(\omega_m(z))\right|\leq \frac{1-|h(\omega_m(z))|^2}{1-|\omega_m(z)|^2}\leq \frac{1}{1-r^{2m}}\left(1-|h(\omega_m(z))|^2\right).\eea
Let $G_3(t)=t+\alpha(1-t^2)$, where $0\leq t\leq \beta(\leq1)$ and $\alpha\geq 0$. Then, $G_3'(t)=1-2\alpha t\geq 0$ for $\alpha\leq 1/2$. Hence, we have $G_3(t)\leq G_3(\beta)$ for $0\leq \alpha\leq 1/2$.
Therefore,
\beas &&|h(\omega_m(z))|+|h'(\omega_m(z))||\omega_q(z)|+\sum_{n=2}^\infty |a_n||\omega_p(z)|^n+\sum_{n=1}^\infty |b_n||\omega_p(z)|^n\\[2mm]
&\leq&|h(\omega_m(z))|+\frac{r^q}{1-r^{2m}}\left(1-|h(\omega_m(z))|^2\right)+(1-a^2)\frac{ r^{2p}}{1- r^p}+k(1-a^2)\frac{ r^p}{1- r^p}\eeas
\beas
&\leq& \frac{a+r^m}{1+ar^m}+\frac{r^q}{1-r^{2m}}\left(1-\left(\frac{a+r^m}{1+ar^m}\right)^2\right)+\frac{(1-a^2)(r^p+k)r^{p}}{1- r^p}\hspace{2cm}\\[2mm]
&=& \frac{a+ r^m}{1+a r^m}+\frac{(1-a^2) r^q}{(1+a r^m)^2}+\frac{(1-a^2)(r^p+k)r^{p}}{1- r^p}\\[2mm]
&=&1+\frac{(1-a) F_1(a, r)}{1+a r^m}\eeas
for $r\in[0,R_{m,q}]$, where $R_{m,q}\in(0,1)$ is the unique root of the equation $r^{2m}+2r^q-1=0$ and 
\beas F_1(a,r)=\frac{(1+a)r^q}{1+ar^m}+\frac{(1+a)(1+ar^m)(r^p+k)r^{p}}{1- r^p} -(1-r^m).\eeas
Differentiating $F_1(a, r)$ partially with respect to $a$, we obtain 
\beas\frac{\pa}{\pa a}F_1(a, r)&=&\frac{r^q(1-r^m)}{(1+a r^m)^2}+\frac{(r^p+k)r^{p}(1+r^m+2 a r^m)}{1 - r^p}\geq 0,\eeas
which shows that $F_1(a, r)$ is a monotonically increasing function of $a\in[0,1)$ and it follows that 
\beas F_1(a,r)\leq \lim_{a\to1^-}F_1(a,r)=\frac{2r^q}{1+r^m}+\frac{2(r^p+k)(1+r^m)r^{p}}{1- r^p} -(1-r^m)\leq 0\eeas
for $r\leq r_{m,p,q,k}$, where $r_{m,p,q,k}\in(0,1)$ is the smallest positive root of the equation 
\bea\label{f3}\frac{2r^q}{1+r^m}+\frac{2 (k+r^p)(1+r^m)r^p}{1- r^p} -(1-r^m)=0,\eea
where $k = (K-1)/(K +1)$.
We claim that $r_{m,p,q,k}\leq R_{m,q}$. For $r>R_{m,q}$, we have $r^{2m}+2r^q-1>0$ and 
\beas \frac{2r^q}{1+r^m}+\frac{2(r^p+k)(1+r^m)r^{p}}{1- r^p} -(1-r^m)=\frac{r^{2m}+2r^q-1}{1+r^m}+ \frac{2(r^p+k)(1+r^m)r^{p}}{1- r^p}>0.\eeas
This shows that $r_{m,p,q,k}\leq R_{m,q}$.\\[2mm]
\indent To prove the sharpness of the radius $r_{m,p,q,k}$, we construct an extremal function that attains the given bounds. Consider the function $f_1(z)=h_1(z)+\ol{g_1(z)}$ in $\mathbb{D}$ such that 
\beas h_1(z)=\frac{a+z}{1+az}=A_0+\sum_{n=1}^\infty A_n z^n\quad\text{and}\quad g_1(z)=\lambda k \sum_{n=1}^\infty A_n z^n\eeas
with $\omega_p(z)=z^p$ for $p\geq 1$, where $A_0=a$, $A_n=(1-a^2)(-a)^{n-1}$ for $n\geq 1$, $a\in[0,1)$, $|\lambda|=1$ and $k=(K-1)/(K+1)$.
For $z=r$, we have 
\beas&&|h_1(r^m)|+|h_1'(r^m)| r^q+\sum_{n=2}^\infty |A_n| r^{pn}+\sum_{n=1}^\infty |k\lambda A_n| r^{pn}\\[2mm]
&&=\frac{a+r^m}{1+a r^m}+\frac{(1-a^2) r^q}{(1+a r^m)^2}+(1-a^2) r^p \sum_{n=2}^\infty (a r^p)^{n-1}+(1-a^2)k r^p\sum_{n=1}^\infty (a r^p)^{n-1}\\[2mm]
&&=\frac{a+ r^m}{1+a r^m}+\frac{(1-a^2) r^q}{(1+a r^m)^2}+\frac{(1-a^2)a r^{2p}}{1-a r^p}+\frac{(1-a^2)k r^p}{1-a r^p}\\[2mm]
&&=1+(1-a) F_2(a, r),\eeas
where 
\beas F_2(a, r)=\frac{(1+a) r^q}{(1+a r^m)^2}+\frac{(1+a)a r^{2p}}{1-a r^p}+\frac{(1+a)k r^p}{1-a r^p}-\frac{(1- r^m)}{1+a r^m}.\eeas
It is evident that
\beas \lim_{a\to1^-} F_2(a, r)=\frac{1}{1+r^m}\left(\frac{2r^q}{1+r^m}+\frac{2(k+r^p)(1+r^m)r^p}{1-r^p}-(1- r^m)\right)>0\eeas
for $ r> r_{m,p,q,k}$, where $r_{m,p,q,k}$ is the smallest positive root of the equation (\ref{f3}) in $(0,R_{m,p})$. This shows that the number $r_{m,p,q,k}$ cannot be improved. This completes the proof.
\end{proof}
\begin{rem}
We discuss some special cases of \textrm{Theorem \ref{TT1}} as well as several helpful observations.
\begin{enumerate}
\item[(i)] Setting $p=q=1$, $m\to\infty$ and $\omega_1(z)=z$ in \textrm{Theorem \ref{TT1}} gives the first part of \textrm{Theorem B}.
\item[(ii)] Setting $m=p=q=1$ and $\omega_1(z)=z$ in \textrm{Theorem \ref{TT1}} gives \textrm{Theorem 3.2} of \cite{BM2024}.
\end{enumerate}
\end{rem}
\noindent In Table \ref{tab1} and Figure \ref{fig1}, we obtained the values of $R_{m,q}$ and $r_{m,p,q,k}$, respectively, for certain values of $m, p, q\in\mathbb{N}$ and $k\in[0,1)$. Table \ref{tab1} lists the values of $R_{m,q}$ that represent the theoretical upper bounds for the computed radius $r_{m,p,q,k}$. As shown in Figure \ref{fig1}, $r_{m,p,q,k}$ consistently falls within these bounds for various parameter settings.
\begin{table}[H]
\centering
\caption{$R_{m,q}\in(0,1)$ is the unique root of equation $r^{2m}+2r^q-1=0$}
\label{tab1}
\begin{tabular}{*{6}{|c}|}
\hline
$m$ &1&3&3&5&10\\
\hline
q&1&3&2&30&30\\
\hline
$R_{m,q}$&$\sqrt{2}-1$&0.745432&0.673348&0.948565&0.958906\\
\hline
\end{tabular}
\end{table}
\begin{figure}[H]
\centering
\includegraphics[scale=0.9]{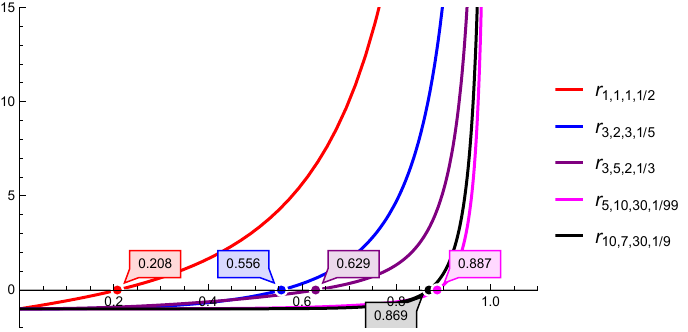}
\caption{The position of $r_{m,p,q,k}$ in $(0,1)$ for different values of $m,p,q\in\mathbb{N}$ and $k\in[0, 1)$}
\label{fig1}
\end{figure}
\noindent The subsequent result presents an improved version of the Bohr inequality for harmonic mappings, incorporating multiple Schwarz functions.
\begin{theo}\label{T7} Let $f(z)=h(z)+\ol{g(z)}=\sum_{n=0}^\infty a_n z^n+\ol{\sum_{n=1}^\infty b_n z^n}$ be a sense-preserving, $K$-quasiconformal harmonic mapping in $\mathbb{D}$, and $\omega_p\in\mathcal{B}_p$ for $p\geq 1$. If $\Vert h(z)\Vert_\infty\leq 1$ in $\mathbb{D}$, then
\beas |h(\omega_m(z))|^2+|h'(\omega_m(z))||\omega_q(z)|+\sum_{n=2}^\infty |a_n||\omega_p(z)|^n+\sum_{n=1}^\infty |b_n||\omega_p(z)|^n\leq 1\eeas
for $r\leq r_{m,p,q,k}\leq R_{2,m,p}$, where $R_{2,m,p}\in(0,1)$ is the unique root of the equation $1-r^{2m}-r^q=0$ and $r_{m,p,q,k}\in(0,1)$ is the smallest positive root of the equation 
\beas -\frac{(1-r^{2m}-r^q)}{(1+r^m)^2}+\frac{(r^p+k)r^{p}}{1- r^p}=0,\eeas
where $k=(K-1)/(K+1)$. The radius $r_{m,p,q,k}$ cannot be improved.
\end{theo}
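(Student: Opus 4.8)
The plan is to run the proof of Theorem~\ref{TT1} almost verbatim, the only genuinely new feature being that the ``head'' term is $|h(\omega_m(z))|^2$ rather than $|h(\omega_m(z))|$, which changes the quadratic one must maximize. Normalize so that $\Vert h\Vert_\infty\le 1$ and put $|a_0|=a\in[0,1)$. Since $f$ is sense-preserving and $K$-quasiconformal, the classical Schwarz lemma gives $|g'(z)|\le k|h'(z)|$ with $k=(K-1)/(K+1)$; Lemmas~\ref{lem2} and~\ref{lem3} then give $|a_n|\le 1-a^2$ and $\sum_{n\ge1}|b_n|^2r^n\le k^2(1-a^2)^2 r/(1-r)$, and a Cauchy--Schwarz step yields $\sum_{n\ge1}|b_n|r^n\le k(1-a^2) r/(1-r)$. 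Inserting $|\omega_p(z)|\le r^p$ (from $\omega_p\in\mathcal B_p$) produces, exactly as in Theorem~\ref{TT1}, the tail estimate $\sum_{n\ge2}|a_n||\omega_p(z)|^n+\sum_{n\ge1}|b_n||\omega_p(z)|^n\le (1-a^2)(r^p+k)r^p/(1-r^p)$.

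For the head, Lemmas~\ref{lem1} and~\ref{lem2} give $|h(\omega_m(z))|\le s_0:=(a+r^m)/(1+ar^m)$ and $|h'(\omega_m(z))|\le(1-|h(\omega_m(z))|^2)/(1-r^{2m})$, so
\[
|h(\omega_m(z))|^2+|h'(\omega_m(z))||\omega_q(z)|\le\psi\bigl(|h(\omega_m(z))|\bigr),\qquad \psi(s)=s^2+\frac{r^q}{1-r^{2m}}(1-s^2).
\]
This is where the window $r\le R_{2,m,p}$ enters: $\psi$ is nondecreasing on $[0,s_0]$ iff its leading coefficient $1-r^q/(1-r^{2m})$ is nonnegative, i.e. iff $1-r^{2m}-r^q\ge0$, which is precisely the constraint defining $R_{2,m,p}$. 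For such $r$ one gets $\psi(|h(\omega_m(z))|)\le\psi(s_0)$, and the identity $1-s_0^2=(1-a^2)(1-r^{2m})/(1+ar^m)^2$ cancels the factor $1-r^{2m}$, leaving the head bounded by $\bigl[(a+r^m)^2+(1-a^2)r^q\bigr]/(1+ar^m)^2$.

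Adding the two bounds and simplifying via $(a+r^m)^2+(1-a^2)r^q-(1+ar^m)^2=-(1-a^2)(1-r^{2m}-r^q)$, the left-hand side is at most $1+(1-a^2)F(a,r)$ with
\[
F(a,r)=-\frac{1-r^{2m}-r^q}{(1+ar^m)^2}+\frac{(r^p+k)r^p}{1-r^p}.
\]
For $r\le R_{2,m,p}$ the numerator $1-r^{2m}-r^q\ge0$, so the first term increases in $a$ and $F(a,r)\le\lim_{a\to1^-}F(a,r)=G(r):=-\dfrac{1-r^{2m}-r^q}{(1+r^m)^2}+\dfrac{(r^p+k)r^p}{1-r^p}$. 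Since $G(0)=-1<0$ while $G(R_{2,m,p})=(R_{2,m,p}^p+k)R_{2,m,p}^p/(1-R_{2,m,p}^p)>0$, the smallest positive zero $r_{m,p,q,k}$ of $G$ lies in $(0,R_{2,m,p})$; this yields both $r_{m,p,q,k}\le R_{2,m,p}$ and $G(r)\le0$ for $r\le r_{m,p,q,k}$, hence $F(a,r)\le0$ and the claimed inequality.

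Sharpness follows the Theorem~\ref{TT1} template: take $h_1(z)=(a+z)/(1+az)=\sum_{n\ge0}A_nz^n$ with $A_n=(1-a^2)(-a)^{n-1}$ for $n\ge1$, $g_1(z)=\lambda k\sum_{n\ge1}A_nz^n$ with $|\lambda|=1$, and $\omega_p(z)=z^p$, $\omega_m(z)=z^m$, $\omega_q(z)=z^q$. Evaluating the left-hand side at $z=r$, summing the geometric-type series, and collecting terms should put it in the form $1+(1-a)F_2(a,r)$ where $\lim_{a\to1^-}F_2(a,r)$ is a positive multiple of $G(r)$, hence strictly positive for $r>r_{m,p,q,k}$, so the radius cannot be enlarged. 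I expect the main obstacle to be the book-keeping around this validity window: one must see that the monotonicity of $\psi$ in $s$ forces $r\le R_{2,m,p}$ and then check that this is not binding because $r_{m,p,q,k}<R_{2,m,p}$; the cancellation of the $1-r^{2m}$ factor is the other delicate spot, but it is mechanical once the Pick bound is in place.
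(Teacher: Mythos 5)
Your proposal is correct and follows essentially the same route as the paper: the same tail estimates via Lemmas~\ref{lem2}, \ref{lem3} and Cauchy--Schwarz, the same observation that the coefficient $1-r^q/(1-r^{2m})\ge 0$ (equivalently $1-r^{2m}-r^q\ge0$) is what permits replacing $|h(\omega_m(z))|$ by its Pick bound, the same reduction to $F(a,r)=-\frac{1-r^{2m}-r^q}{(1+ar^m)^2}+\frac{(r^p+k)r^p}{1-r^p}$ with monotonicity in $a$, and the same extremal function $h(z)=(a+z)/(1+az)$, $g=\lambda k\sum A_nz^n$ for sharpness. The only differences are cosmetic: your intermediate-value argument locating the root in $(0,R_{2,m,q})$ is a slightly cleaner way to get $r_{m,p,q,k}\le R_{2,m,q}$, and your sharpness computation is left at sketch level, but it reduces to exactly the paper's $F_4(a,r)$ whose limit as $a\to1^-$ is your $G(r)$.
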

\begin{proof}
Using similar argument as in the proof of \textrm{Theorem \ref{TT1}}, and in view of \textrm{Lemmas \ref{lem1}, \ref{lem2}} and \ref{lem3}, we obtain the inequalities (\ref{f4}), (\ref{f5}), (\ref{f6}) and (\ref{f7}).
Thus, we have
\beas&& |h(\omega_m(z))|^2+|h'(\omega_m(z))||\omega_q(z)|+\sum_{n=2}^\infty |a_n||\omega_p(z)|^n+\sum_{n=1}^\infty |b_n||\omega_p(z)|^n\\[1mm]
&\leq& |h(\omega_m(z))|^2+\frac{r^q}{1-r^{2m}}\left(1-|h(\omega_m(z))|^2\right)+(1-a^2)\frac{ r^{2p}}{1- r^p}+k(1-a^2)\frac{ r^p}{1- r^p}\\[1mm]
&=&\left(1-\frac{ r^q}{1- r^{2m}}\right)|h(\omega_m(z))|^2+\frac{ r^q}{1- r^{2m}}+\frac{(1-a^2)(r^p+k)r^{p}}{1- r^p}\\[1mm]
&\leq& \frac{1- r^{2m}- r^q}{1- r^{2m}}\left(\frac{a+ r^m}{1+a r^m}\right)^2+\frac{ r^q}{1- r^{2m}}+\frac{(1-a^2)(r^p+k)r^{p}}{1- r^p}\\[1mm]
&=&1+\frac{1-r^{2m}-r^q}{1-r^{2m}}\left( \left(\frac{a+r^m}{1+ar^m}\right)^2-1\right)+\frac{(1-a^2)(r^p+k)r^{p}}{1- r^p}\\[1mm]
&=&1-\frac{(1-a^2)(1-r^{2m}-r^q)}{(1+ar^m)^2}+\frac{(1-a^2)(r^p+k)r^{p}}{1- r^p}\\[1mm]
&=&1+(1-a^2)F_3(a, r),\eeas
where the second inequality holds for such $r\in[0,1]$ satisfying $1- r^{2m}- r^q\geq 0$, {\it i.e.,} for $r\in[0, R_{2,m,q}]$, where $R_{2,m,q}\in(0,1)$ is the unique root of the equation $1-r^{2m}-r^q=0$ and 
\beas F_3(a,r)=-\frac{(1-r^{2m}-r^q)}{(1+ar^m)^2}+\frac{(r^p+k)r^{p}}{1- r^p}\eeas
with $k=(K-1)/(K+1)$.
It is evident that $F_3(a, r)$ is a monotonically increasing function of $a\in[0,1)$ and it follows that 
\beas F_3(a, r)\leq \lim_{a\to1^-}F_3(a, r)=-\frac{(1-r^{2m}-r^q)}{(1+r^m)^2}+\frac{(r^p+k)r^{p}}{1- r^p}\leq 0\eeas
for $ r\leq r_{m,p,q,k}$ and $r_{m,p,q,k}$ is the smallest positive root of the equation 
\bea\label{f8}F_{m,p,q,k}( r):=-\frac{(1-r^{2m}-r^q)}{(1+r^m)^2}+\frac{(r^p+k)r^{p}}{1- r^p}=0.\eea
It is evident that $1- r^{2m}- r^q<0$ for $r>R_{2,m,q}$ and hence, we have $F_{m,p,q,k}( r)>0$ for $r>R_{2,m,q}$. Therefore, we have $r_{m,p,q,k}\leq R_{2,m,q}$. \\[2mm]
\indent 
To prove the sharpness of the radius $r_{m,p,q,k}$, we construct an extremal function that attains the given bounds. Consider the function $f_2(z)=h_2(z)+\ol{g_2(z)}$ in $\mathbb{D}$ such that 
\beas h_2(z)=\frac{a+z}{1+az}=A_0+\sum_{n=1}^\infty A_n z^n\quad\text{and}\quad g_2(z)=\lambda k \sum_{n=1}^\infty A_n z^n\eeas
with $\omega_p(z)=z^p$ for $p\geq 1$, where $A_0=a$, $A_n=(1-a^2)(-a)^{n-1}$ for $n\geq 1$, $a\in[0,1)$, $|\lambda|=1$ and $k=(K-1)/(K+1)$.
For $z=r$, we have 
\beas&&|h_2(r^m)|^2+|h_2'(r^m)| r^q+\sum_{n=2}^\infty |A_n| r^{pn}+\sum_{n=1}^\infty |k\lambda A_n| r^{pn}\hspace{2cm}\\[1mm]
&=&\left(\frac{a+r^m}{1+a r^m}\right)^2+\frac{(1-a^2) r^q}{(1+a r^m)^2}+(1-a^2) r^p \sum_{n=2}^\infty (a r^p)^{n-1}+(1-a^2)k r^p\sum_{n=1}^\infty (a r^p)^{n-1}\\[2mm]
&=&1-\frac{(1-a^2)(1-r^{2m})}{(1+a r^m)^2}+\frac{(1-a^2) r^q}{(1+a r^m)^2}+\frac{(1-a^2) a r^{2p}}{1-a r^p}+\frac{(1-a^2)k r^p}{1-a r^p}\\[2mm]
&=&1+(1-a^2)F_4(a, r),\eeas
where 
\beas F_4(a, r)=-\frac{1-r^{2m}-r^q}{(1+a r^m)^2}+\frac{(k+a r^p)r^p}{1-a r^p}.\eeas
It is evident that
\beas \lim_{a\to1^-} F_4(a, r)=-\frac{1-r^{2m}-r^q}{(1+r^m)^2}+\frac{(k+r^p)r^p}{1-r^p}>0\quad\text{for}\quad r> r_{m,p,q}, \eeas
for $r>r_{m,p,q,k}$, where $r_{m,p,q,k}$ is the smallest positive root of the equation (\ref{f8}) in $[0, R_{2,m,q}]$. This shows that the number $r_{m,p,q,k}$ cannot be improved. This completes the proof.
\end{proof}
\begin{rem}\label{rem4}
We discuss some special cases of \textrm{Theorem \ref{T7}} as well as several helpful observations.
\begin{enumerate}
\item[(i)] Setting $p=q=1$, $m\to\infty$ and $\omega_1(z)=z$ in \textrm{Theorem \ref{T7}} gives the second part of \textrm{Theorem B}.
\item[(ii)] By setting $m=p=q=1$ and $\omega_1(z)=z$ in \textrm{Theorem \ref{T7}}, we obtain \textrm{Theorem 3.3} of \cite{BM2024}.
\end{enumerate}
\end{rem}
\noindent In Table \ref{tab2} and Figure \ref{fig2}, we obtain the values of $R_{2,m,q}$ and $r_{m,p,q,k}$, respectively, for certain values of $m,p,q\in\mathbb{N}$ and $q\in[0,1)$. From Table \ref{tab2} and Figure \ref{fig2}, it can be observed that the inequality $r_{m,p,q,k}\leq R_{2,m,q}$ is satisfied for arbitrary choices of $m,p,q,k$.
\begin{table}[H]
\centering
\caption{$R_{2,m,q}\in(0,1)$ is the unique root of the equation $1-r^{2m}-r^q=0$}
\label{tab2}
\begin{tabular}{*{6}{|c}|}
\hline
$m$&1&3&3&5&10\\
\hline
$q$&1&3&2&30&30\\
\hline
$R_{2,m,q}$&$\left(\sqrt{5}-1\right)/2$&0.8518&0.826031&0.962497&0.972272\\
\hline
\end{tabular}
\end{table}
\begin{figure}[H]
\centering
\includegraphics[scale=0.8]{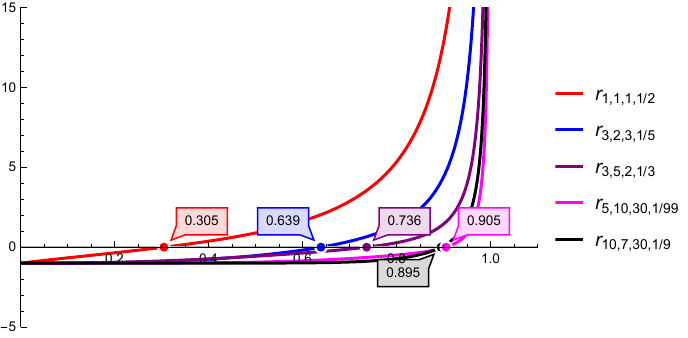}
\caption{The position of $r_{m,p,q,k}$ in $(0,1)$ for different values of $m, p,q\in\mathbb{N}$ and $k\in[0,1)$}
\label{fig2}
\end{figure}
\noindent In the subsequent result, we obtain a refined version of the Bohr inequality for harmonic mappings, incorporating multiple Schwarz functions.
\begin{theo}\label{T1} Let $f(z)=h(z)+\ol{g(z)}=\sum_{n=0}^\infty a_n z^n+\ol{\sum_{n=1}^\infty b_n z^n}$ be a sense-preserving, $K$-quasiconformal harmonic mapping in $\mathbb{D}$, and $\omega_k\in\mathcal{B}_k$ for $k\geq 1$. If $h(z)$ is bounded in $\mathbb{D}$, then
\beas &&\sum_{n=0}^\infty |a_n||\omega_p(z)|^n+\sum_{n=1}^\infty |b_n| |\omega_p(z)|^n+\left(\frac{1}{1+|a_0|}+\frac{|\omega_p(z)|}{1-|\omega_p(z)|}\right)\sum_{n=1}^\infty |a_n|^2 |\omega_p(z)|^{2n}\\
&&+\frac{(1-r_p^{2m})^2}{8r_p^{2m}}\sum_{n=1}^\infty n|a_n|^2 |\omega_m(z)|^{2n}\leq \Vert h(z)\Vert_\infty\eeas 
for $r\leq r_{p,k}$, where $r_{p,k}\in(0,1)$ is the unique root of the equation $(2k+3)r^p-1=0$ with $k=(K-1)/(K+1)$.
The numbers $r_{p,k}$ and $(1-r_p^{2m})^2/(8r_p^{2m})$ cannot be improved.
\end{theo}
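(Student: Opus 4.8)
The plan is to follow the established template of Theorems \ref{TT1} and \ref{T7}, combining the majorant-series estimates for the analytic and co-analytic parts with a careful treatment of the area-type term $\sum_{n=1}^\infty n|a_n|^2|\omega_m(z)|^{2n}$. Normalize so that $\Vert h(z)\Vert_\infty\leq 1$ and write $|a_0|=a\in[0,1)$, $r=|z|$. Since $f$ is $K$-quasiconformal, the Schwarz lemma gives $|g'(z)|\leq k|h'(z)|$ with $k=(K-1)/(K+1)$, so Lemma \ref{lem3} yields $\sum_{n=1}^\infty|b_n|^2r^n\leq k^2\sum_{n=1}^\infty|a_n|^2r^n$; combined with the Cauchy--Schwarz bound from the proof of Theorem \ref{TT1} and $|\omega_p(z)|\leq r^p$, $|\omega_m(z)|\leq r^m$ (classical Schwarz lemma, $\omega_p\in\mathcal B_p$, $\omega_m\in\mathcal B_m$), one obtains the componentwise estimates one needs for the first three groups of terms. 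The key new input is Lemma \ref{lem5}: applied with $N=1$ (so $t=\lfloor 0/2\rfloor=0$, the $\mathrm{sgn}$-sum being empty) it gives
\[
\sum_{n=1}^\infty|a_n|r^n+\left(\frac{1}{1+a}+\frac{r}{1-r}\right)\sum_{n=1}^\infty|a_n|^2r^{2n}\leq\frac{(1-a^2)r}{1-r},
\]
and after the substitution $r\mapsto|\omega_p(z)|$ (legitimate since $|\omega_p(z)|\leq r^p<1$) this controls exactly the first and third bracketed groups in the statement together.

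Next I would insert the area term. Since $\sum_{n=1}^\infty n|a_n|^2|\omega_m(z)|^{2n}\leq\sum_{n=1}^\infty n|a_n|^2 r^{2mn}$ and, by Lemma \ref{lem2}, $|a_n|\leq 1-a^2$ for $n\geq1$, one can bound $\sum_{n=1}^\infty n|a_n|^2 r^{2mn}\leq (1-a^2)^2\sum_{n=1}^\infty n r^{2mn}=(1-a^2)^2 r^{2m}/(1-r^{2m})^2$; multiplying by the prefactor $(1-r_p^{2m})^2/(8r_p^{2m})$ and evaluating at the critical radius $r=r_{p,k}$ produces a clean term of the form $(1-a^2)^2/8$ (this is the reason for the precise shape of the constant). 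Assembling all pieces, the left-hand side is bounded by
\[
a+\frac{(1-a^2)(2k+2)\,r^p}{1-r^p}\cdot\tfrac12+\cdots
\]
— more precisely by $1+(1-a)\Phi(a,r)$ for an auxiliary function $\Phi$ that is monotonically increasing in $a$ (each summand has nonnegative $a$-derivative), so it suffices to check $\lim_{a\to1^-}\Phi(a,r)\leq0$. That limit reduces, at $r=r_{p,k}$, to the inequality $(2k+3)r_p^p-1\leq0$, which is precisely the defining equation of $r_{p,k}$; monotonicity of $r\mapsto(2k+3)r^p-1$ gives the result for all $r\leq r_{p,k}$.

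For sharpness I would take the now-standard extremal $f(z)=h_0(z)+\overline{g_0(z)}$ with $h_0(z)=(a+z)/(1+az)$, $g_0(z)=\lambda k\sum_{n\geq1}A_n z^n$, $|\lambda|=1$, and $\omega_p(z)=z^p$, $\omega_m(z)=z^m$, where $A_n=(1-a^2)(-a)^{n-1}$. One computes each of the four groups in closed form (geometric-type sums, exactly as in the proofs of Theorems \ref{TT1} and \ref{T7}), writes the total as $1+(1-a)\Psi(a,r)$, and shows $\lim_{a\to1^-}\Psi(a,r)>0$ for $r>r_{p,k}$, which forces $r_{p,k}$ to be optimal; the same family, by varying $a$ and then letting $a\to1$, also pins down the constant $(1-r_p^{2m})^2/(8r_p^{2m})$ as best possible. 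The main obstacle I anticipate is bookkeeping the area term correctly: one must make sure that bounding $|\omega_m(z)|^{2n}\leq r^{2mn}$ rather than keeping $|\omega_m(z)|$ free does not lose sharpness, which is why the constant is frozen at $r=r_p$ rather than left as a function of $r$ — verifying that this freezing is consistent with the extremal computation (i.e.\ that equality propagates through) is the delicate point, and it works precisely because in the extremal function $|\omega_m(z)|=r^m$ with equality and the critical radius is attained.
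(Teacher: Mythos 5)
Your overall architecture matches the paper's: normalize $\Vert h\Vert_\infty\leq 1$, use Lemma \ref{lem3} plus Cauchy--Schwarz for the $b_n$-sum, Lemma \ref{lem5} with $N=1$ (after replacing $r$ by $|\omega_p(z)|\leq r^p$) for the combined $a_n$-sums, bound the area-type term via $|a_n|\leq 1-a^2$, and close with the standard extremal $h(z)=(a+z)/(1+az)$, $g=\lambda k\sum A_nz^n$. However, there is a genuine gap at the one step where the constant $(1-r_{p,k}^{2m})^2/(8r_{p,k}^{2m})$ actually matters. You write the assembled bound as $1+(1-a)\Phi(a,r)$ with
\[
\Phi(a,r)=-1+(1+k)(1+a)\frac{r^p}{1-r^p}+\mu\,(1-a)(1+a)^2\frac{r^{2m}}{(1-r^{2m})^2},\qquad \mu=\frac{(1-r_{p,k}^{2m})^2}{8r_{p,k}^{2m}},
\]
and claim $\Phi$ is increasing in $a$ because ``each summand has nonnegative $a$-derivative.'' That justification is false: the $a$-derivative of the area summand is proportional to $(1+a)(1-3a)$, which is negative for $a>1/3$. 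Worse, the conclusion itself can fail for $r<r_{p,k}$: with $\mu$ frozen at the critical radius, one needs $(1+k)\frac{r^p}{1-r^p}\geq 4\mu\frac{r^{2m}}{(1-r^{2m})^2}$ near $a=1$, and this breaks down, e.g., for $p$ large and $m=1$ at radii slightly below $r_{p,k}$ (the first term collapses much faster than the second). So ``check $\lim_{a\to 1^-}\Phi(a,r)\leq 0$'' does not by itself control $\sup_a\Phi(a,r)$ for all $r\leq r_{p,k}$, and your appeal to monotonicity of $(2k+3)r^p-1$ only governs that limit, not the full function.

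The paper's proof avoids this by performing the two monotonicity arguments in the opposite order: first it shows the whole majorant ($F_5(a,r)$ there) is increasing in $r$, reducing everything to $r=r_{p,k}$; only then does it prove monotonicity in $a$ of the resulting function $F_6(a)=1+2\mu(1-a^2)\frac{r_{p,k}^{2m}}{(1-r_{p,k}^{2m})^2}-\frac{2}{1+a}$, and that monotonicity holds precisely because $2\mu a(1+a)^2\frac{r_{p,k}^{2m}}{(1-r_{p,k}^{2m})^2}\leq 8\mu\frac{r_{p,k}^{2m}}{(1-r_{p,k}^{2m})^2}\leq 1$ when $\mu\leq(1-r_{p,k}^{2m})^2/(8r_{p,k}^{2m})$ --- this computation is the entire reason the constant has that shape, and it is exactly the balancing step your sketch glosses over. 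Your sharpness discussion (same extremal family, writing the total as $1+(1-a)F_7(a,r)$, monotonicity in $r$, limit $0$ at $a\to1^-$, $r=r_{p,k}$) does mirror the paper. To repair the main part, either adopt the paper's order (monotone in $r$ first, then the constrained $a$-monotonicity at $r=r_{p,k}$), or prove directly that $-1+c_1(1+a)+c_2(1-a)(1+a)^2\leq 0$ on $[0,1]$ whenever $c_1\leq 1/2$ and $c_2\leq 1/8$, which amounts to the same use of the critical values at $r_{p,k}$.
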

\begin{proof} For simplicity, let $\Vert h(z)\Vert_\infty\leq 1$.
Using similar argument as in the proof of \textrm{Theorem \ref{TT1}} and \textrm{Lemma \ref{lem3}}, we obtain the inequality (\ref{f4}).
In view of \textrm{Lemma \ref{lem5}}, we have
\bea\label{f9} \sum_{n=1}^\infty |a_n| r^n+\left(\frac{1}{1+a}+\frac{r}{1- r}\right)\sum_{n=1}^\infty |a_n|^2  r^{2n}\leq \frac{(1-a^2)r}{1- r}.\eea
Let $G_4(t)=t/(1-t)$, where $0\leq t\leq \beta(\leq1)$ and $a\geq 0$. It is evident that $G_4(t)$ is a monotonically increasing function of $t\in[0,\beta]$, and it follows that $G_4(t)\leq G_4(\beta)$.
Using (\ref{f9}), we have
\bea\label{f10} &&\sum_{n=1}^\infty |a_n||\omega_p(z)|^n+\left(\frac{1}{1+|a_0|}+\frac{|\omega_p(z)|}{1-|\omega_p(z)|}\right)\sum_{n=1}^\infty |a_n|^2 |\omega_p(z)|^{2n}\nonumber\\[2mm]
&\leq&\sum_{n=1}^\infty |a_n|r^{np}+\left(\frac{1}{1+a}+\frac{r^p}{1-r^p}\right)\sum_{n=1}^\infty |a_n|^2 r^{2pn}\nonumber\\[2mm]
&\leq&\frac{(1-a^2)r^p}{1- r^p}.\eea
Using (\ref{f4}) and (\ref{f10}), we have 
\beas &&\sum_{n=0}^\infty |a_n||\omega_p(z)|^n+\sum_{n=1}^\infty |b_n| |\omega_p(z)|^n+\left(\frac{1}{1+|a_0|}+\frac{|\omega_p(z)|}{1-|\omega_p(z)|}\right)\sum_{n=1}^\infty |a_n|^2 |\omega_p(z)|^{2n}\\
&&+\mu\sum_{n=1}^\infty n|a_n|^2 |\omega_m(z)|^{2n}\\
&\leq&a+\frac{(1-a^2)r^p}{1- r^p}+k(1-a^2)\frac{ r^p}{1- r^p}+\mu(1-a^2)^2\sum_{n=1}^\infty nr^{2mn}\\[2mm]
&=&a+\frac{(1-a^2)r^p}{1- r^p}+k(1-a^2)\frac{ r^p}{1- r^p}+\mu(1-a^2)^2\frac{r^{2m}}{(1-r^{2m})^2}\\
&=&1+F_5(a, r)\eeas
where 
\beas F_5(a,r)=\frac{(1-a^2)}{2}\left(1+\left(\frac{2(k+1)r^p}{1-r^p}-1\right)+2\mu (1-a^2)\frac{r^{2m}}{(1-r^{2m})^2}-\frac{2}{1+a}\right),\eeas
where $k=(K-1)/(K+1)$.
Differentiating partially $F_5(a,r)$ with respect to $r$, we obtain
\beas \frac{\pa}{\pa r}F_5(a,r)=\frac{(1-a^2)(k+1)pr^{p-1}}{(1- r^p)^2}+\lambda(1-a^2)^2\frac{2 m r^{2m-1}(1+r^{2m})}{(1-r^{2m})^3}\geq 0,\eeas
which shows that $F_5(a,r)$ is a monotonically increasing function of $r\in[0, 1)$ and hence, we have $F_5(a,r)\leq F_5(a,r_{p,k})$ for $r\leq r_{p,k}$, where $r_{p,k}\in(0, 1)$ is the unique positive root of the equation $(2k+3)r^p-1=0$.
Evidently, $F_5(a,r_{p,k})=(1-a^2)F_6(a)/2$, where
\beas F_6(a)=1+2\mu (1-a^2)\frac{r_{p,k}^{2m}}{\left(1-r_{p,k}^{2m}\right)^2}-\frac{2}{1+a}.\eeas
Note that 
\beas F_6(0)=2\mu\frac{r_{p,k}^{2m}}{\left(1-r_{p,k}^{2m}\right)^2}-1\quad\text{and}\quad \lim_{a\to1^-}F_6(a)=0.\eeas
Differentiating $F_6(a)$ with respect to $a$, we obtain
\beas F_6'(a)=\frac{2}{(1+a)^2}\left(1-2\mu a(1+a)^2\frac{r_{p,k}^{2m}}{\left(1-r_{p,k}^{2m}\right)^2}\right)
\geq \frac{2}{(1+a)^2}\left(1-\frac{8\mu r_{p,k}^{2m}}{\left(1-r_{p,k}^{2m}\right)^2}\right)\geq 0\eeas
for $\mu\leq (1-r_{p,k}^{2m})^2/(8r_{p,k}^{2m})$. Therefore, $F_6(a)$ is a monotonically increasing function of $a\in[0,1)$, whenever $\mu\leq (1-r_{p,k}^{2m})^2/(8r_{p,k}^{2m})$. Hence, we 
have $F_6(a)\leq \lim_{a\to1^-}F_6(a)=0$ for $a\in[0,1)$ and $\mu\leq (1-r_{p,k}^{2m})^2/(8r_{p,k}^{2m})$. Therefore, we have
\beas&& \sum_{n=0}^\infty |a_n||\omega_p(z)|^n+\sum_{n=1}^\infty |b_n| |\omega_p(z)|^n+\left(\frac{1}{1+|a_0|}+\frac{|\omega_p(z)|}{1-|\omega_p(z)|}\right)\sum_{n=1}^\infty |a_n|^2 |\omega_p(z)|^{2n}\\
&&+\frac{(1-r_{p,k}^{2m})^2}{8r_{p,k}^{2m}}\sum_{n=1}^\infty n|a_n|^2 |\omega_m(z)|^{2n}\leq 1\eeas
for $r\leq r_{p,k}$, where $r_{p,k}\in(0, 1)$ is the unique root of the equation $(2k+3)r^p-1=0$.\\[2mm]
\indent To prove the sharpness of the radius $r_{p,k}$, we construct an extremal function that attains the given bounds.
Consider the function $f_3(z)=h_3(z)+\ol{g_3(z)}$ in $\mathbb{D}$ such that 
\beas h_3(z)=\frac{a+z}{1+az}=A_0+\sum_{n=1}^\infty A_n z^n\quad\text{and}\quad g_3(z)=\lambda k \sum_{n=1}^\infty A_n z^n\eeas
with $\omega_p(z)=z^p$ for $p\geq 1$, where $A_0=a\in[0,1)$, $A_n=(1-a^2)(-a)^{n-1}$ for $n\geq 1$, $|\lambda|=1$ and $k=(K-1)/(K+1)$.
Thus,
\beas && \sum_{n=0}^\infty |A_n||r^p|^n+\sum_{n=1}^\infty |k\lambda A_n| |r^p|^n+\left(\frac{1}{1+|A_0|}+\frac{|r^p|}{1-|r^p|}\right)\sum_{n=1}^\infty |A_n|^2 |r^p|^{2n}\\[1mm]
&&+\frac{\left(1-r_{p,k}^{2m}\right)^2}{8r_{p,k}^{2m}}\sum_{n=1}^\infty n|A_n|^2 |r^m|^{2n}\\[1mm]
&=&a+\frac{(1+k)(1-a^2)}{a}\sum_{n=1}^\infty (a r^p)^{n}+\frac{(1-a^2)^2(1+a r^p) r^{2p}}{(1+a)(1- r^p)}\sum_{n=1}^\infty (a r^p)^{2(n-1)}\\[1mm]
&&+\frac{\left(1-r_{p,k}^{2m}\right)^2}{8r_{p,k}^{2m}}(1-a^2)^2r^{2m}\sum_{n=1}^\infty n (ar^{m})^{2(n-1)} \\[1mm]
&=&a+\frac{(1+k)(1-a^2) r^p}{1-ar^p}+\frac{(1-a^2)(1-a)(1+a r^p) r^{2p}}{(1- r^p)(1-a^2 r^{2p})}+\frac{\left((1-a^2)(1-r_{p,k}^{2m}) r^{m}\right)^2}{8r_{p,k}^{2m}(1-a^2r^{2m})^2} \\[1mm]
&=& 1+(1-a) F_7(a, r),\eeas
where 
\beas F_7(a, r)=\frac{(1+k)(1+a) r^p}{1-ar^p}+\frac{(1-a^2)r^{2p}}{(1- r^p)(1-a r^{p})}+\frac{(1-a^2)(1+a)(1-r_p^{2m})^2 r^{2m}}{8r_p^{2m}(1-a^2r^{2m})^2}-1.\eeas
Differentiating $F_7(a, r)$ partially with respect to $r$, we obtain
\beas\frac{\pa}{\pa r} F_7(a, r)&=&\frac{(1+a)(1+k) p\; r^{p-1}}{(1-a r^p)^2}+(1-a^2)\frac{p r^{2 p-1} \left(2-(a+1) r^p\right)}{\left(1-r^p\right)^2 \left(1-a r^p\right)^2}\\[2mm]
&&+\frac{2 m (1-a^2)(1+a)(1-r_p^{2m})^2\left(1+a^2 r^{2 m}\right) r^{2 m-1} }{8r_p^{2m}\left(1-a^2 r^{2 m}\right)^3}>0\eeas
for $r\in(0,1)$. Therefore, $F_7(a, r)$ is a strictly increasing function of $r\in(0,1)$. Therefore, for $r>r_p$, we have $F_7(a, r)>F_7(a, r_p)$. It is evident that
\beas \lim_{a\to1^-}F_7(a,r_p)=\frac{2(1+k)r_p^{p}}{1-r_p^p}-1=0.\eeas
This shows that the numbers $r_p$ and $(1-r_p^{2m})^2/(8r_p^{2m})$ are the best possible. This completes the proof.
\end{proof}
\begin{rem}
Setting $p=m$ in \textrm{Theorem \ref{T1}} gives $(1-r_p^{2m})^2/(8r_p^{2m})=\frac{8K^2 \left(3 K+1\right)^2}{(K+1)^2 (5 K+1)^2}$. 
\end{rem}
\section{Discussion and Concluding Remarks}
\noindent In this study, we affirmatively answered Questions \ref{Q1} and \ref{Q2} by establishing a series of results that form a unified theory for Bohr's inequality in the context of  
$K$-quasiconformal harmonic mappings. An important accomplishment is the derivation of a refined version of Theorem C (via Theorem \ref{T6} with $p=m=1$), thereby resolving 
the previously unsolved problem. Furthermore, Theorems \ref{TT1} and \ref{T7} present improved versions by incorporating both the function's value and its derivative, whereas 
Theorem \ref{T1} provides a refined version by including area-type terms that push the boundaries of classical inequality. As demonstrated in Remarks \ref{rem2}-\ref{rem4}, our 
framework systematically retrieves the foundational results of Theorems A, B, and C by selecting appropriate parameters. The data presented in Tables \ref{tab0}-\ref{tab2} and 
Figures \ref{fig0}-\ref{fig2} demonstrate a consistent alignment with our theoretical conclusions, thereby providing a visual representation of the behavior of our calculated radii 
across a range of parameter settings. This work opens up several possibilities for future research:\\[1mm]
(i) Is it possible to achieve similar results for classes of harmonic mappings associated with starlike or convex functions \cite{AH2021, 1AH2021}?\\[1mm]
(ii) Is it possible to establish a Bohr phenomenon for pluriharmonic mappings in several complex variables using a similar multi-Schwarz function approach?\\[1mm]
(iii) Can the techniques be extended to incorporate the Bohr-Rogosinski radius (e.g. \cite{AKP2020, KKP2021}) for harmonic mappings with multiple Schwarz functions?
\section*{Declarations}
\noindent{\bf Acknowledgment:} The work of the first author is supported by University Grants Commission (IN) fellowship (No. F. 44 - 1/2018 (SA - III)).\\[2mm]
{\bf Conflict of Interest:} The authors declare no conflicts of interest regarding the publication of this paper.\\[1mm]
{\bf Availability of data and materials:} Not applicable.

\end{document}